\newtheorem{theorem}{Theorem}[section]
\newtheorem{lemma}{Lemma}
\newtheorem{corollary}{Corollary}
\theoremstyle{definition}
\newtheorem{definition}{Definition}
\theoremstyle{remark}
\numberwithin{equation}{section}
\begin{document}

\title{Simplicity of the Lie algebra of skew symmetric elements of a Leavitt path algebra}

\author{Adel Alahmedi}
\address{Department of Mathematics, King Abdulaziz University, P.O.Box 80203, Jeddah, 21589, Saudi Arabia}
\curraddr{Department of Mathematics, King Abdulaziz University, P.O.Box 80203, Jeddah, 21589, Saudi Arabia}
\email{adelnife2@yahoo.com}

\author{Hamed Alsulami}
\address{Department of Mathematics, King Abdulaziz University, P.O.Box 80203, Jeddah, 21589, Saudi Arabia}
\email{hhaalsalmi@kau.edu.sa}

\keywords{Leavitt Path Algebra. Cuntz-Krieger C*-Algebra. Simple Lie Algebra.}

\begin{abstract}
 For a field $F$ of characteristic not $2$ and a directed row-finite graph $\Gamma$ let $L(\Gamma)$ be the Leavitt Path Algebra with standard involution $*.$ We study the lie algebra of $K=K(L(\Gamma),*)$ of  $*-$skew-symmetric elements and find necessary and sufficient conditions for the Lie algebra $[K,K]$ to be simple.
\end{abstract}
\maketitle

\section{Introduction.}

G. Abrams and G. Aranda Pino in [1] have shown that a Leavitt path algebra of a row finite graph $\Gamma(V,E)$ is simple if and only if (i) the set of vertices $V$ has no proper hereditary and saturated subsets (ii) every cycle in $\Gamma$ has an exit. In [2] G. Abrams and Z. Mesyan found necessary and sufficient conditions for the Lie algebra $[L(\Gamma),L(\Gamma)]$ to be simple under the assumption that  $L(\Gamma)$ is simple. They proved that if $V$ is infinite then $[L(\Gamma),L(\Gamma)]$ is simple. If $V$ is finite then $[L(\Gamma),L(\Gamma)]$ is simple if and only if $1_{L(\Gamma)}=\sum_{v\in V}v\notin [L(\Gamma),L(\Gamma)].$ In this paper we study the Lie algebra $K=K(L(\Gamma),*)$ of $*-$skew-symmetric elements and prove that $[K,K]$ is simple if and only if the graph $\Gamma$ is almost simple, see the Definition $4$ below.

\section{\protect\bigskip Definitions and Terminology}

A (directed) graph $\Gamma =(V,E,s,r)$ consists of two sets $V$ and $E$ that are respectively called vertices and edges, and two maps $s,$ $r:E\rightarrow V$.The vertices $s(e)$ and $r(e)$ are referred to as the source and the range of the edge $e$, respectively. The graph is called row-finite if for all vertices $v\in V$, $\text{card}(s^{-1}(v))<\infty .$ A vertex $v$ for which $(s^{-1}(v))=\emptyset$ is called a sink. A vertex $v$ such that $r^{-1}(v)=\emptyset$ is called a source. A path $p=e_{1}\cdots e_{n}$ in a graph $\Gamma $ is a sequence of edges $e_{1},\cdots ,e_{n}$ such that $r(e_{i})=s(e_{i+1})$ for $i=1,\cdots,n-1.$ In this case we say that the path $p$ starts at the vertex $s(e_{1})$ and ends at the vertex $r(e_{n}).$ If $s(e_{1})=$ $r(e_{n}),$ then the path is closed. If $p=e_{1}\cdots e_{n}$ is a closed path and if the vertices $s(e_{1}),\cdots,s(e_{n})$ are distinct, then we call the path $p$ a cycle. A cycle of length $1$ is called a loop. Denote $V(p)=\{s(e_1),\cdots,s(e_n)\},$  $E(p)=\{e_1,\cdots, e_n\}.$
An edge $e\in E$ is called an exit of a cycle $C$ if $s(e)\in V(C),$ but $e\notin E(C).$

\bigskip

Let $X,Y$ are nonempty subsets of the set $V$ then we denote $E(X,Y)=\{e\in E \mid\, s(e)\in X ,  r(e)\in Y\}.$

\begin{definition}\label{def1}
\textit{We  call an edge $e\in E$ a \textbf{fiber} if $s(e)$ is source, $r(e)$ is  sink and $E(V, r(e))=\{e\}.$}
\end{definition}

\begin{definition}\label{def2}
\textit{We  call a connected graph $\Gamma$ a \textbf{fork} if , $\text{card}(V)>1,$ one vertex in $V$ is a source, whereas all  other vertices are sinks.}
\end{definition}

\begin{definition}\label{def3}
\textit{We call a vertex $v$ in a connected graph $\Gamma(V,E)$ a \textbf{balloon} over a nonempty subset $W$ of $V$ if (i) $v\notin W,$ (ii) there is a loop $C\in E(v,v),$ (iii) $E(v,W)\neq\emptyset,$ (iv) $E(v,V)=\{C\}\cup E(v,W),$ and (v) $E(V,v)=\{C\}.$}
\end{definition}

Let $\Gamma $ be a row-finite graph and let $F$ be a field. The Leavitt path $F$-algebra $L(\Gamma )$ is the $F$-algebra presented by the set of generators $\{v \mid\, v\in V\},$ $\{e,$ $e^{\ast }|$ $e\in E\}$ and the set of relators (1) $v_{i}v_{j}=\delta _{v_{i},v_{j}}v_{i}$ for all $v_{i},v_{j}\in V;$ (2) $s(e)e=er(e)=e,$ $r(e)e^{\ast }=e^{\ast }s(e)=e^{\ast}$ for all $e\in E;$ (3) $e^{\ast }f=\delta _{e,f}r(e),$ for all $e,$ $f\in E;$ (4) $v=\sum_{s(e)=v}ee^{\ast }$, for an arbitrary vertex $v$ which is not a sink.\newline  The mapping which sends $v$ to $v$ for $v\in V,$ $e$ to $e^{\ast }$ and $e^{\ast }$ to $e$ for $e\in E,$ extends to an involution of the algebra $L(\Gamma ).$ If $p=e_{1}\cdots e_{n}$ is a path, then
$p^{\ast}=e_{n}^{\ast }\cdots e_{1}^{\ast }.$\newline In what follows we consider only row-finite directed graphs.
\newline We will repeatedly consider the following graphs :

\includegraphics[width=.1\textwidth]{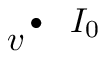}, a vertex ,\hspace{.5cm} \includegraphics[width=.1\textwidth]{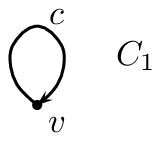}, a loop,  \hspace{.5cm}  and \includegraphics[width=.3\textwidth]{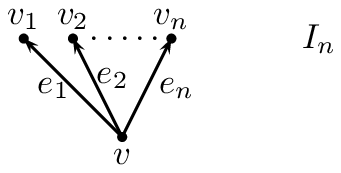}, a fork.

\bigskip

We say a vertex $v_2$ is a descendant of a vertex $v_1$ if there exists a path $p$ such that $s(p)=v_1,\, r(p)=v_2.$ \newline A nonempty subset $W\subseteq V$ is said to be hereditary if for an arbitrary element $w\in W$ all descendants of $w$ lie in $W$ (see[1]). If $W$ is a hereditary subset of $V$ then the ideal $I$ generated by $W$ in $L(\Gamma)$ is spanned by all elements $pq^*,$ where $p,q$ are pathes, $r(p)=r(q)\in W.$
\newline A nonempty subset $W\subseteq V$ is saturated if every non-sink $v\in V$ such that $r(s^{-1}(v))\subseteq W$ lies in $W$ (see [1]).
\newline Let $W$ be a hereditary saturated subset of $V,$ let $I$ be the ideal generated by $W$ in $L(\Gamma).$ Then the factor algebra $L(\Gamma)/I$ is isomorphic to the Leavitt path algebra $L(\Gamma')$ of
the quotient graph $\Gamma'=(V\setminus W, E\setminus E(V,W)),$ [6, Lemma 2.3]. \newline We view vertices $v\in V$ as paths of length $0,$ $s(v)=r(v)=v.$

\begin{definition}\label{def4}
\textit{We call a graph $\Gamma$ simple if the Leavitt path algebra $L(\Gamma)$ is simple.}
\end{definition}

In [1] it is shown that a graph $\Gamma=(V,E)$ is simple if and only if $V$ does not have proper hereditary saturated subsets and every cycle of $\Gamma$ has an exit.

\begin{definition}\label{def5}
\textit{We call a graph $\Gamma=(V,E)$ almost simple if $V$ contains subsets $V\supset V_1\supset V_2,$ such that the graph $\Gamma_2=(V_2, E(V_2,V_2))$ is simple, every vertex from $V_1\setminus V_2$ is a balloon over $V_2,$ every vertex $v\in V\setminus V_1$ is a source, $E(v,V)=E(v,V_1)$ consists of one edge which is a fiber.}
\end{definition}

Let $A$ be  an associative $F-$algebra. For elements $a,b\in A$, let $[a,b]=ab-ba$ be their  commutator. Then $A^{(-)}=(A, [,])$ is a Lie algebra.
The space of $*-$skew-symmetric elements $K=K(L(\Gamma),*)=\{a\in A\mid\, a^*=-a\}$ is a subalgebra of the Lie algebra $L(\Gamma)^{(-)}.$

\bigskip

The main result  of this paper is Theorem \ref{th1}. In section 3 we prove that if  the Lie algebra $[K,K]$ is simple then  the graph $\Gamma$ is almost simple.
In section 4 we prove the other direction using theorem of  I. Herstein which required that  the field $F$ is of characteristic not $2.$

\begin{theorem}\label{th1}
The Lie algebra $[K,K]$ is simple if and only if the graph $\Gamma$ is almost simple.
\end{theorem}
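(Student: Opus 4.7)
Both directions turn on the following observation: the standard involution maps the generator $pq^*$ to $qp^*$, so for any hereditary saturated $W\subseteq V$ the ideal $I(W)$ of $L(\Gamma)$ generated by $W$ is $*$-invariant. Hence $K\cap I(W)$ is a Lie ideal of $K$, and $[K\cap I(W),K]$ is a Lie ideal of $[K,K]$; this furnishes the bridge between the hereditary saturated subsets of $V$ (the graph-combinatorial data) and the Lie ideals of $[K,K]$ (the Lie-algebraic data).

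For the necessity direction, assume $[K,K]$ is simple. For each nonzero proper hereditary saturated $W$, simplicity forces $[K\cap I(W),K]=0$ or $[K,K]$; I would use this dichotomy to argue that the lattice of hereditary saturated subsets collapses to a chain $\emptyset\subseteq V_2\subseteq V_1\subseteq V$, with $V_2$ the smallest nonzero member and $V_1$ the unique maximal proper one. Next I would show that $\Gamma_2=(V_2,E(V_2,V_2))$ is simple: a proper hereditary saturated subset of $V_2$ would give a further Lie ideal, and a cycle without exit inside $V_2$ would produce a matrix-over-$F[x,x^{-1}]$ summand whose skew center is a visible nonzero proper Lie ideal of $[K,K]$. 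To identify the vertices in $V_1\setminus V_2$ as balloons and the vertices in $V\setminus V_1$ as sources of fibers, I would examine the quotients $L(\Gamma)/I(V_2)$ and $L(\Gamma)/I(V_1)$, and for every alternative local configuration (two non-loop edges leaving a would-be balloon, a longer non-exit cycle, a non-fiber edge from a source, an extra incoming edge to a sink, and so on) explicitly construct a skew element $a\in K$ for which $[a,K]$ is a proper nonzero Lie ideal of $[K,K]$, contradicting simplicity.

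For the sufficiency direction, assume $\Gamma$ is almost simple. The plan is to reduce to the simple case $\Gamma=\Gamma_2$ and invoke Herstein's theorem (which uses $\mathrm{char}\,F\neq 2$): for the simple associative algebra $L(\Gamma_2)$ with involution, $[K,K]/([K,K]\cap Z)$ is Lie simple, and a direct check that $[K,K]\cap Z=0$ then upgrades this to simplicity of $[K,K]$ itself. The reduction requires showing that balloons and fibers extend $L(\Gamma_2)$ without contributing genuinely new Lie ideals: the skew part attached to a balloon loop $C$ is spanned by $C-C^*$ and its shifts by the external edges into $V_2$, whose brackets against $[K(L(\Gamma_2)),K(L(\Gamma_2))]$ sweep out the entire balloon contribution, while fibers $e$ produce only $e-e^*$ and vertex shifts, which either lie in the center or are absorbed into $[K,K]$ by bracketing against suitable vertex idempotents. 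The main obstacle lies in the forward direction: although the dichotomy on $[K\cap I(W),K]$ is clean, rigidifying it into the balloon-plus-fiber picture demands a delicate case analysis in which each forbidden local configuration must be ruled out by constructing an explicit proper nonzero Lie ideal of $[K,K]$, and that combinatorial bookkeeping is where most of the work will live.
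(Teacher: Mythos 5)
Your proposal has genuine gaps in both directions. In the necessity direction, the device you rely on does not produce Lie ideals: for a single $a\in K$, the space $[a,K]$ is not in general a Lie ideal of $[K,K]$, so the ``forbidden configuration $\Rightarrow$ explicit proper ideal'' strategy has no engine behind it. The paper's engine is the one you mention only in passing: for a $*$-invariant associative ideal $I=I(W)$, the space $[K_I,K_I]$ is a Lie ideal of $[K,K]$, and the hard content is proving $[K_I,K_I]\neq(0)$ after first stripping fibers (Lemma~\ref{le3} and its corollary; Lemma~\ref{le4} is false with fibers present), then deducing from $[K',K']=(0)$ that the quotient graph is a union of vertices, loops and forks and upgrading the loops to balloons (Lemmas~\ref{le1}, \ref{le2}, \ref{le5}--\ref{le7}). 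Your structural claim that the hereditary saturated subsets form a chain $\emptyset\subseteq V_2\subseteq V_1\subseteq V$ is false: for a single vertex $w$ with two balloons $v_1,v_2$ over it, the sets $\{w\},\{w,v_1\},\{w,v_2\},\{w,v_1,v_2\}$ are all hereditary and saturated; the paper proves only that all such subsets have nonempty intersection, giving a smallest one. Your treatment of a cycle $C$ without exit is also off: $L(C)\cong M_d(F[t,t^{-1}])$ arises as a corner $aL(\Gamma)a$, not a direct summand, and the ``skew center'' is not a usable ideal --- central skew elements of $M_d(F[t,t^{-1}])$ have nonzero trace while $[K,K]$ consists of trace-zero matrices, so the skew center typically meets $[K,K]$ in $(0)$. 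The paper instead uses the descending chain of $*$-ideals $M_d(J_n)$, $J_n=((1-t)^n)$, with $\bigcap_n J_n=(0)$, both inside the corner (Lemma~\ref{le8}) and transported to $L(\Gamma)$ via the ideals they generate (Lemma~\ref{le9}).

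In the sufficiency direction the proposed reduction to $\Gamma_2$ is simply not available. Only fibers split off (each contributes an $M_2(F)$ summand whose skew part is one-dimensional, hence contributes nothing to $[K,K]$); balloons cannot be discarded. Concretely, if $\Gamma_2$ is a single vertex $w$ and $v$ is one balloon over it, then $K(L(\Gamma_2),*)=0$, yet the theorem asserts (and the paper proves) that $[K,K]$ is a nonzero simple Lie algebra; so $[K,K]\not\cong[K(L(\Gamma_2),*),K(L(\Gamma_2),*)]$ and applying Herstein to $L(\Gamma_2)$ proves the wrong statement. The paper's route keeps the balloons inside the ideal $I=id_{L(\Gamma)}(V_2)$, which properly contains $L(\Gamma_2)$ (it contains all paths through balloons into $V_2$), proves that $I$ itself is a simple, infinite-dimensional associative algebra (Lemma~\ref{le10}), shows $K(I,*)=wK(I,*)w+[K(I,*),K(I,*)]$ using three orthogonal symmetric idempotents (Lemma~\ref{le11}), and then establishes $[K,K]=[K_I,K_I]$ (Lemma~\ref{le12}); the balloon elements $c^p-(c^*)^p$ are absorbed by rewriting their mutual brackets through symmetric elements and invoking Herstein's identity $H(A,*)=\mathrm{span}\{k^2\mid k\in K(A,*)\}$ for the simple algebra $I$, and Herstein's simplicity theorem is then applied to $I$, not to $L(\Gamma_2)$. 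Your sketch of ``sweeping out the balloon contribution'' by bracketing against $[K(L(\Gamma_2),*),K(L(\Gamma_2),*)]$ has no argument behind it and already fails in the one-balloon example, where that Lie algebra is zero.
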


\section{ Almost Simple Graphs }

\begin{lemma}\label{le1}
Let $\Gamma(V,E)$ be a directed graph.
\begin{itemize}
  \item[(i)] If $e,f$ are different edges and $r(e)=s(f),$ then $[e-e^*,f-f^*]\neq 0.$
  \item[(ii)]  If $e,f$ are different edges and $r(e)=r(f),$ then $[e-e^*,f-f^*]\neq 0.$
\end{itemize}
\end{lemma}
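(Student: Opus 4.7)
\medskip
\noindent\textbf{Proof plan.}

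My plan is to exploit the standard $\mathbb{Z}$-grading on $L(\Gamma)$ in which $\deg(e)=1$, $\deg(e^*)=-1$, and $\deg(v)=0$ for every $v\in V$ and $e\in E$. All four defining relations of $L(\Gamma)$ are homogeneous with respect to this grading, so $L(\Gamma)=\bigoplus_{n\in\mathbb{Z}}L_n$, and it suffices to exhibit a single nonzero homogeneous component of $[e-e^*,f-f^*]$.

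Since $e\neq f$, the relation $e^*f=\delta_{e,f}r(e)$ gives $e^*f=0$ and likewise $f^*e=0$, so direct expansion yields
\[
[e-e^*,\,f-f^*] \;=\; (ef-fe) + (fe^*-ef^*) + (e^*f^*-f^*e^*),
\]
where the three parenthesized groups are homogeneous of degrees $2$, $0$, and $-2$ respectively.

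For (i), with $r(e)=s(f)$, I focus on the degree-$2$ component $ef-fe$. Multiplying on the left by $e^*$ and using $e^*e=r(e)$ together with $e^*f=0$,
\[
e^*(ef-fe) \;=\; r(e)\,f - (e^*f)\,e \;=\; s(f)\,f \;=\; f \;\neq\; 0,
\]
so $ef-fe\neq 0$ and therefore $[e-e^*,f-f^*]\neq 0$.

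For (ii), with $r(e)=r(f)$, I focus instead on the degree-$0$ component $fe^*-ef^*$. Multiplying on the left by $e^*$ and using $r(e)\,f^*=r(f)\,f^*=f^*$,
\[
e^*(fe^*-ef^*) \;=\; (e^*f)\,e^* - (e^*e)\,f^* \;=\; -r(f)\,f^* \;=\; -f^* \;\neq\; 0,
\]
so the degree-$0$ component is nonzero and again $[e-e^*,f-f^*]\neq 0$.

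The only conceptual point to be careful about is that a commutator of inhomogeneous elements is not itself homogeneous, so one has to invoke the elementary fact that a nonzero homogeneous summand of an element already forces the whole element to be nonzero. Otherwise the computations are entirely direct, and notably no case-splits (whether $e$ or $f$ happens to be a loop, whether $fe$ vanishes, whether $s(f)=r(f)$, etc.) are needed, because the grading isolates exactly one piece that cannot be cancelled by the others.
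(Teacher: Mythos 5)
Your proof is correct, and it takes a genuinely different route from the paper. The paper expands the commutator into the six monomials $ef,\ ef^*,\ (fe)^*,\ fe,\ fe^*,\ (ef)^*$ and invokes the basis theorem for Leavitt path algebras ([4, Theorem 1]) to say that whichever of these monomials are nonzero are linearly independent, so the single nonzero monomial $ef$ (in (i)), respectively $ef^*$ and $fe^*$ (in (ii)), cannot be cancelled. You instead use the $\mathbb{Z}$-grading to isolate one homogeneous component and then kill the remaining ambiguity inside that component by multiplying on the left by $e^*$, reducing everything to $f\neq 0$, resp.\ $f^*\neq 0$. What your argument buys is that it needs much less input: only the (standard, but still nontrivial) fact that edges and ghost edges are nonzero in $L(\Gamma)$, rather than linear independence of distinct monomials $pq^*$; in particular it sidesteps the question of whether, say, $ef$ could equal $fe$, which the paper settles only by appeal to the basis. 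The paper's argument is shorter on the page but leans more heavily on the cited basis theorem. One small point worth making explicit in your write-up: the nonvanishing of $f$ and $f^*$ itself ultimately comes from the same basis theorem (or from a nonzero representation), so you are not avoiding that reference entirely, only using a weaker consequence of it.
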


\begin{proof}

\begin{itemize}
  \item[(i)] Since  $e,f$ are different edges we have  $e^*f=f^*e=0.$
  \newline Hence  $[e-e^*,f-f^*]=ef-ef^*+(fe)^*-fe+fe^*-(ef)^*.$ The nonzero elements among $ef,$ $ef^*,$ $(fe)^*,$ $fe,$ $fe^*,$ $(ef)^*$ are linearly independent: for example they can be included in a base of [4, Theorem 1]. Since the element $ef$ is nonzero, the assertion has been proved.
  \item[(ii)]Instead of the nonzero element $ef$ in the argument above we use the fact that $ef^*\neq0,$ $fe^*\neq0.$
\end{itemize}
\end{proof}

\begin{lemma}\label{le2}
If $[K,K]=0,$ then $\Gamma$ is a disjoint union of vertices, loops and forks.
\end{lemma}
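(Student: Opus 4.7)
The plan is to argue by contrapositive against Lemma \ref{le1}. Since $[K,K]=0$ and $e-e^{*},\,f-f^{*}\in K$ for any edges $e,f$, Lemma \ref{le1} forces that no two distinct edges $e\neq f$ satisfy $r(e)=s(f)$ (by part (i)) or $r(e)=r(f)$ (by part (ii)). I extract two structural consequences. Property (A): every vertex is the range of at most one edge. Property (B): whenever $r(e)=s(f)$ holds for some edge $f$, we must have $f=e$, so $s(e)=r(e)$ and $e$ is a loop; equivalently, \emph{if $r(e)$ is not a sink, then $e$ is a loop}.

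Next I analyse how (A) and (B) constrain the connected component of an arbitrary vertex $v$. If $v$ carries a loop $e$, then any second edge $f$ with $s(f)=v$ would give $s(f)=v=r(e)$ with $f\neq e$, violating (B); any second edge $f$ with $r(f)=v$ would violate (A) against $e$. Hence $v$ is incident only to $e$, and its connected component is a single loop. Otherwise, suppose $v$ has no loop. If $v$ emits a non-loop edge $f$, then $r(f)\neq v$, and (B) forces $r(f)$ to be a sink. Moreover $v$ cannot be the range of any edge $h$: such an $h$ would have non-sink range $v$ (since $f$ leaves $v$), so by (B) would be a loop at $v$, contradicting that $v$ has no loop.

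So if $v$ is neither isolated nor carries a loop, then $v$ is a source, all edges leaving $v$ go to sinks, and by (A) the targets of these edges are pairwise distinct sinks. Property (A) also forbids any other edge in $\Gamma$ from terminating at one of these sinks, while the sinks have no outgoing edges by definition. Hence the connected component of $v$ equals $\{v\}\cup\{r(f):s(f)=v\}$ together with the edges leaving $v$, which is precisely a fork in the sense of Definition \ref{def2}. Classifying components as isolated vertices, loops, or forks yields the claimed decomposition.

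I expect no serious obstacle: the entire content is packaged in Lemma \ref{le1}, and the remainder is a short case analysis on the local picture at each vertex. The only point to be careful about is to treat separately the presence/absence of a loop before discussing incoming and outgoing non-loop edges, so that (A) and (B) can be invoked cleanly.
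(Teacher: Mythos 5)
Your proof is correct and follows exactly the route the paper intends: the paper's own proof just says the lemma ``immediately follows from Lemma~\ref{le1},'' and your properties (A) and (B) together with the local case analysis at each vertex are precisely the omitted details. The only implicit case (a loop-free vertex that emits nothing but receives an edge) is absorbed into the fork component of the source of its unique incoming edge, as your final classification indicates.
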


\begin{proof}
This lemma immediately follows from Lemma~\ref{le1}.
\end{proof}

Recall that a nonempty subset $W\subseteq V$ is said to be hereditary if $r(s^{-1}(w))\subseteq W$ for every vertex $w\in W,$ see [1].
\newline A nonempty subset $W\subseteq V$ is saturated if every non-sink $v\in V,$ such that $r(s^{-1}(v))\subseteq W$ lies in $W,$ see [1].
\newline If $W$ is a hereditary and saturated subset of $V,$ then the ideal $I$ generated by $W$ in $L(\Gamma)$ is spanned by all elements $pq^*,$
where $p,q$ are paths, $r(p)=r(q)\in W.$ In this case $L(\Gamma)/I\cong L(\Gamma'),$ where $\Gamma'=(V\setminus W, E\setminus E(V,W)),$
see [3, Lemma 2.3].

\begin{lemma}\label{le3}
Let $\Gamma(V,E)$ be a connected graph and $e\in E$ be a fiber. Let $\Gamma'=(V\setminus\{r(e)\},E\setminus\{e\}).$
Then $L(\Gamma)\cong L(\Gamma')\oplus M_2(F).$
\end{lemma}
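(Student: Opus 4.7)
The plan is to exhibit a central idempotent $\epsilon = ee^* + w$ of $L(\Gamma)$ that splits the algebra as $L(\Gamma) = \epsilon L(\Gamma) \oplus (1 - \epsilon) L(\Gamma)$, and then to identify the two summands as $M_2(F)$ and $L(\Gamma')$ respectively. Writing $u = s(e)$, $w = r(e)$ and $p = ee^*$, I would first check that $\{p, w, e, e^*\}$ forms a set of $2 \times 2$ matrix units---the identities $p^2 = p$, $w^2 = w$, $pw = wp = 0$, $ee^* = p$, $e^*e = w$, $pe = e = ew$, $we^* = e^* = e^*p$, $ep = pe^* = 0$ all follow directly from the defining relations---and this produces a subalgebra isomorphic to $M_2(F)$ with identity $\epsilon$. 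Next I would show that $\epsilon$ is central in $L(\Gamma)$ by checking it commutes with every generator. Here the fiber conditions---$u$ a source, $w$ a sink, $E(V, w) = \{e\}$---are indispensable: for any edge $g \neq e$ the products $e^*g$, $wg$, $gw$, and $ge$ all vanish, so $\epsilon g = g\epsilon = 0$ whenever $s(g) = u$, and similar computations handle edges out of other vertices as well as vertices $v \neq u, w$. Once centrality is established, $L(\Gamma)$ decomposes as a direct sum of two-sided ideals, and $\epsilon L(\Gamma) = Fp + Fw + Fe + Fe^* \cong M_2(F)$.

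To identify $(1 - \epsilon) L(\Gamma) \cong L(\Gamma')$, I would use the universal property of $L(\Gamma')$ to build a homomorphism $\phi : L(\Gamma') \to L(\Gamma)$ sending $v \mapsto v$ for $v \in V \setminus \{u, w\}$, $u \mapsto u - p$, and $f \mapsto f$, $f^* \mapsto f^*$ for every $f \in E \setminus \{e\}$. Verifying relations (1)--(3) of $L(\Gamma')$ is routine; the only delicate point is relation (4) at $u$, which in $\Gamma'$ reads $u = \sum_i f_i f_i^*$, where $f_1, \ldots, f_k$ denote the edges out of $u$ other than $e$. In $L(\Gamma)$ the original relation (4) gives $u = ee^* + \sum_i f_i f_i^* = p + \sum_i f_i f_i^*$, so $\phi(u) = u - p = \sum_i f_i f_i^*$, exactly matching the image of relation (4) in $\Gamma'$. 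Injectivity of $\phi$ follows from the graded uniqueness theorem---$\phi$ respects the standard $\mathbb{Z}$-grading and $\phi(v) \neq 0$ for each vertex of $\Gamma'$---and the image of $\phi$ equals $(1 - \epsilon) L(\Gamma)$.

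The main obstacle is this last relation check at $u$: it depends essentially on $u$ being a non-sink in $\Gamma'$, i.e., on $u$ having at least one outgoing edge besides the fiber $e$. In the degenerate case where this fails, connectedness forces $\Gamma$ to consist of just $u \xrightarrow{e} w$, and $L(\Gamma) \cong M_2(F)$ directly; this borderline case must be handled separately, with the convention that the isolated vertex $u$ of $\Gamma'$ is absorbed into the matrix factor.
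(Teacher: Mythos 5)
Your argument is correct and, at bottom, rests on the same decomposition as the paper: your central idempotent $\epsilon=ee^*+w$ is exactly the identity of the ideal $I=id_{L(\Gamma)}(r(e))=Fe+Fe^*+Fr(e)+Fee^*$ that the paper works with, so your summand $\epsilon L(\Gamma)$ is the paper's $I\cong M_2(F)$. Where you genuinely diverge is in identifying the complementary summand: the paper notes that $\{r(e)\}$ is hereditary and saturated, uses the fact that an ideal with identity is a direct summand, and then quotes the quotient-graph lemma [6, Lemma 2.3] to get $L(\Gamma)/I\cong L(\Gamma')$, whereas you build the isomorphism by hand via the universal property (the assignment $u\mapsto u-ee^*$, $v\mapsto v$, $f\mapsto f$, with the relation (4) check at $u$ you single out) and get injectivity from the graded uniqueness theorem. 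Your route is more self-contained in that it avoids the cited quotient lemma, at the cost of importing graded uniqueness and of verifying centrality of $\epsilon$ directly; note also that when $V$ is infinite $L(\Gamma)$ has no unit, so your splitting should be read as the Peirce decomposition $x=\epsilon x+(x-\epsilon x)$ with respect to the central idempotent. On the borderline case $s^{-1}(s(e))=\{e\}$, i.e.\ $\Gamma$ equal to the single edge $u\to w$: there the statement literally fails, since $L(\Gamma)\cong M_2(F)$ while $L(\Gamma')\oplus M_2(F)\cong F\oplus M_2(F)$, so this case must be excluded rather than absorbed by a ``convention''; the paper does exactly this (its first step forces $s^{-1}(s(e))\supsetneq\{e\}$, which is also what makes $\{r(e)\}$ saturated and what makes your $u-ee^*$ nonzero for graded uniqueness), so you and the paper are on the same footing here, but you should phrase it as an exclusion from the hypotheses, not as a redefinition of $L(\Gamma')$.
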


\begin{proof}
If $s^{-1}(s(e))=\{e\}$ then \includegraphics[width=.18\textwidth]{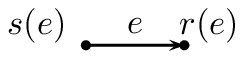} is an isolated subgraph, hence it is equal to $\Gamma,$ which contradicts our assumption. Hence $s^{-1}(s(e))\supsetneqq \{e\}.$ This implies that  $\{r(e)\}$ is a hereditary and saturated subset of $V.$
The ideal $I=id_{L(\Gamma)}(r(e))$ is the  $F-$span of $\{pq^*\mid\, r(p)=r(q)=r(e)\}.$ Looking at possible pathes we see that
$I=Fe+Fe^*+Fr(e)+Fee^*\cong M_2(F).$ Since every ideal with an identity is a direct summand we have $L(\Gamma)\cong L(\Gamma)/I\oplus M_2(F).$
Now $ L(\Gamma)/I\cong L(\Gamma')$ by [6 , Lemma 2.3(1)] as $\{r(e)\}$ is a hereditary saturated subset.
\end{proof}

\begin{corollary}\label{cr1}
Let $\Gamma$ be a connected graph, which is not a fork. Let $E_1=\{e\in E|\, e \text{ is a fiber }\},$
$\Gamma'=(V\setminus r(E_1), E\setminus E_1).$ Then $L(\Gamma)\cong L(\Gamma')\oplus A,$
where $A$ is a direct sum of $\text{card} (E_1)$ copies of $M_2(F).$ In particular, $[K(L(\Gamma),*),K(L(\Gamma),*)]\cong[K(L(\Gamma'),*),K(L(\Gamma'),*)].$
\end{corollary}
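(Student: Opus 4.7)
The plan is to prove the decomposition by induction on $|E_1|$, iterating Lemma~\ref{le3}, and then to deduce the Lie algebra statement from the $*$-equivariance of this decomposition together with the observation that on each $M_2(F)$ summand the induced involution makes the skew-symmetric part one-dimensional (hence with trivial self-bracket).

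The base case $|E_1|=0$ is immediate. For the inductive step I pick a fiber $e\in E_1$ and first verify the hidden hypothesis of Lemma~\ref{le3}, namely $s^{-1}(s(e))\supsetneq\{e\}$: otherwise the connected component of $s(e)$ would consist only of $\{s(e),r(e)\}$ joined by $e$, forcing $\Gamma$ itself to be a fork. Lemma~\ref{le3} then gives $L(\Gamma)\cong L(\Gamma_1)\oplus M_2(F)$ with $\Gamma_1=(V\setminus\{r(e)\},E\setminus\{e\})$. To reapply the inductive hypothesis I check that $\Gamma_1$ is still connected (since $r(e)$ was attached to the rest of $\Gamma$ only through the single edge $e$), is not a fork (otherwise its unique source would have to be $s(e)$, and adjoining $r(e)$ and $e$ back in would realize $\Gamma$ itself as a fork), and has fiber set exactly $E_1\setminus\{e\}$. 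The last point is a case check on the three defining fiber conditions: each potential new violation for some $f\in E\setminus\{e\}$ would force either $s(f)=r(e)$ (impossible since $r(e)$ is a sink), $r(f)=s(e)$ (impossible since $s(e)$ is a source), or $r(f)=r(e)$ (impossible since $r(f)\in V\setminus\{r(e)\}$). The induction then produces $L(\Gamma)\cong L(\Gamma')\oplus A$ with $A$ a direct sum of $|E_1|$ copies of $M_2(F)$.

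For the Lie algebra statement I would observe that the ideal $I=\mathrm{id}_{L(\Gamma)}(r(e))$ used in Lemma~\ref{le3} is $*$-invariant because $r(e)^{*}=r(e)$, so the decomposition respects the involutions. Taking matrix units $E_{11}=ee^{*}$, $E_{22}=r(e)$, $E_{12}=e$, $E_{21}=e^{*}$ identifies the induced involution on the $M_2(F)$ factor with the transpose, whose skew-symmetric part is the one-dimensional line $F(e-e^{*})$; its self-bracket is therefore zero. Since $L(\Gamma_1)$ and $M_2(F)$ commute as orthogonal direct summands, each inductive step yields $[K(L(\Gamma),*),K(L(\Gamma),*)]\cong[K(L(\Gamma_1),*),K(L(\Gamma_1),*)]$, and iterating gives the claimed isomorphism with $[K(L(\Gamma'),*),K(L(\Gamma'),*)]$. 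The main obstacle is the bookkeeping in the inductive step---verifying that $\Gamma_1$ stays connected, stays non-fork, and has the correct fiber set---so that the induction can actually be carried out.
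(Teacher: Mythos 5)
Your route is the intended one---iterating Lemma~\ref{le3}---and your bookkeeping for the inductive step is correct and more explicit than anything the paper records: you rightly observe that connectedness plus ``not a fork'' is exactly what rules out $s^{-1}(s(e))=\{e\}$, your three-case check that the fiber set of $\Gamma_1$ is $E_1\setminus\{e\}$ is sound, and identifying the induced involution on the $M_2(F)$ summand with the transpose (so its skew part is the line $F(e-e^*)$, with trivial bracket) is precisely what the ``in particular'' clause needs.

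The one genuine shortfall is that an induction on $|E_1|$ proves the statement only when $E_1$ is finite, whereas the corollary asserts a direct sum of $\mathrm{card}(E_1)$ copies of $M_2(F)$ and is later applied (via Definition~\ref{def5} and the reduction at the start of Section 4) to graphs where the set of fibers may be infinite: nothing in ``almost simple'' bounds $\mathrm{card}(V\setminus V_1)$, and a row-finite connected non-fork graph can indeed have infinitely many fibers (e.g.\ sources $a_i$, $i\in\mathbb{N}$, each with one fiber $a_i\to b_i$ and one extra edge $a_i\to c$). To cover the general case, treat all fibers simultaneously rather than one at a time: for each $e\in E_1$ the ideal $I_e=Fe+Fe^*+Fr(e)+Fee^*\cong M_2(F)$ is $*$-invariant, and distinct fibers give orthogonal ideals (distinct fibers have distinct ranges, since $E(V,r(e))=\{e\}$), so $A=\sum_{e\in E_1}I_e$ is their direct sum; the set $r(E_1)$ is hereditary (its elements are sinks) and saturated (a non-sink all of whose out-edges end in $r(E_1)$ would have all its out-edges fibers, making its component a fork, which connectedness and the non-fork hypothesis exclude), so $L(\Gamma)/A\cong L(\Gamma')$; and one checks that $v\mapsto v-\sum_{e\in E_1,\,s(e)=v}ee^*$, $f\mapsto f$, $f^*\mapsto f^*$ for $f\in E\setminus E_1$ defines a $*$-homomorphism $L(\Gamma')\to L(\Gamma)$ onto an ideal complementing $A$ (the sums are finite by row-finiteness, and no vertex of $V\setminus r(E_1)$ loses all of its out-edges, again by the fork exclusion). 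Alternatively, upgrade your induction to a transfinite one with a limit-stage argument; either way, your finite-case argument is correct and the Lie-algebra conclusion then follows exactly as you describe.
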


We call a graph $\Gamma$ fiber-less if none of its edges is a fiber.

\begin{lemma}\label{le4}
Let $\Gamma(V,E)$ be a connected fiber-less graph. Let $W$ be a nonempty hereditary and saturated  subset of $V.$ Let $I=id_{L(\Gamma)}(W)$ and let $K_{I}=K(I,*).$
If $[K,K]$ is simple then $[K_{I},K_{I}]\neq (0).$
\end{lemma}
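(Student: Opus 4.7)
The plan is to argue by contradiction: assume $[K_I,K_I]=0$ and exhibit a nonzero element inside $[K_I,K_I]$. The case $W=V$ is trivial, since then $I=L(\Gamma)$, so $K_I=K$ and $[K_I,K_I]=[K,K]\neq 0$ by the hypothesis that $[K,K]$ is simple. So I may assume $W\subsetneq V$; since $\Gamma$ is connected and $W$ is hereditary (edges cannot leave $W$), there must be an edge $e\in E$ with source $u\in V\setminus W$ and range $v\in W$. Because $v\in W$, the identity $e=e\,r(e)$ places $e$ in $I$, so $e-e^*\in K_I$.

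Next, I split on the local structure at $v$. If $v$ admits another incoming edge $f\ne e$, then $r(f)=v\in W$ places $f-f^*$ in $K_I$, and Lemma~\ref{le1}(ii) gives $[e-e^*,f-f^*]\neq 0$, a contradiction. If instead $e$ is the only incoming edge at $v$ but $v$ has an outgoing edge $f$, then heredity gives $r(f)\in W$, so again $f-f^*\in K_I$, and Lemma~\ref{le1}(i) applied with $r(e)=v=s(f)$ produces a nonzero bracket in $K_I$. The remaining case is that $v$ is a sink with $E(V,v)=\{e\}$; this is where the fiber-less hypothesis enters. Since $e$ is not a fiber and $v$ is already a sink with $e$ as its unique incoming edge, $u=s(e)$ cannot be a source, so there is some edge $g$ with $r(g)=u$, necessarily $g\neq e$ since $r(g)=u\neq v=r(e)$. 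Both $ge$ and $e$ end at $v\in W$, so $\alpha=ge-(ge)^*$ and $\beta=e-e^*$ lie in $K_I$.

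The heart of the argument, and the only place a real calculation is required, is to verify $[\alpha,\beta]\neq 0$ in this last case. Using the Leavitt relations, in particular $g^*e=e^*g=0$ (since $g\neq e$), $e^2=0$ (since $s(e)\ne r(e)$, so $e$ is not a loop), and $eg=(e^*)^2=0$ (since $v=r(e)$ is a sink, no edge can start at $v$), the cross terms in $\alpha\beta$ and $\beta\alpha$ all vanish, leaving $[\alpha,\beta]=ee^*g^*-gee^*$. These are the two standard basis monomials $pq^*$ with $(p,q)=(e,ge)$ and $(p,q)=(ge,e)$; they are distinct elements of the canonical basis of $L(\Gamma)$ invoked in [4, Theorem 1] (the same basis used to prove Lemma~\ref{le1}), so their difference is nonzero, contradicting $[K_I,K_I]=0$ and completing the proof.
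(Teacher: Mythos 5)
Your proof is correct, and it takes a noticeably different route from the paper's. The paper argues globally: assuming $[K_I,K_I]=0$, it applies Lemma~\ref{le2} to the $*$-invariant subgraph $(W,E(W,W))$ to conclude that this subgraph is a disjoint union of isolated vertices, loops and forks, and then rules out each component type in turn (a loop or fork component is shown to be isolated in $\Gamma$, forcing $\Gamma=C_1$ or $\Gamma=I_n$ and contradicting simplicity of $[K,K]$; an isolated-vertex component leads, via the same bracket $[ge-(ge)^*,e-e^*]\neq 0$ that you use, to a fiber). You instead work purely locally: after disposing of $W=V$ (where $I=L(\Gamma)$), you use connectedness plus heredity to produce a single crossing edge $e$ from $V\setminus W$ into $W$ and analyze only the vertex $v=r(e)$ --- a second incoming edge gives Lemma~\ref{le1}(ii), an outgoing edge gives Lemma~\ref{le1}(i) (note $e\neq f$ is automatic since $s(e)\notin W$ while $s(f)\in W$), and in the remaining case $v$ is a sink with $E(V,v)=\{e\}$, so fiber-lessness supplies $g$ with $r(g)=s(e)$ and the explicit computation $[ge-(ge)^*,e-e^*]=ee^*g^*-gee^*\neq 0$ (both monomials end at the sink $v$, hence are distinct basis elements in the basis of [4]; incidentally your sign is the correct one, the paper's displayed $gee^*-ee^*g^*$ is off by a sign, harmlessly). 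What each approach buys: yours is shorter and self-contained, never invokes Lemma~\ref{le2} or the loop/fork case analysis, and makes explicit where connectedness and the case $W=V$ enter, which the paper leaves implicit; the paper's version, at the cost of more cases, records the structure of $(W,E(W,W))$ under the hypothesis $[K_I,K_I]=0$, in the same spirit as the balloon/fiber analysis it develops in the subsequent lemmas, though that extra information is not needed for this particular conclusion.
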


\begin{proof}
Suppose that $[K_{I},K_{I}]=(0).$ Since $W,\, E(W,W)$ lie in $I$ and are $*$-invariant it follows from
Lemma~\ref{le2} that the graph $(W,E(W,W))$ is a disjoint union of graphs $I_0,$ $C_1,$ and $I_n.$

Suppose that the graph $(W, E(W,W))$ contains a loop $C_1.$ Then $E(v,V)=E(v,W)=\{c\}.$
If $e\in E(V,v)$ and $e\neq c,$ then both $e,c\in I$ and $[e-e^*,c-c^*]\neq0,$ by Lemma~\ref{le1}(ii), which contradicts our assumption. Hence $E(V,v)=\{c\}.$
Since the graph $\Gamma$ is assumed to be connected it follows that $\Gamma=C_1.$  But our assumption was that $[K,K]$ is a simple nonzero Lie algebra, a contradiction.

Let $I_n$ be a connected component of $(W, E(W,W)).$ Now, $E(v_i,V)=E(v_i,W)$ for $i=1,2,\ldots,n,$ so $v_1,\ldots,v_n$ are sinks.
If $e\in E(V,v_i)$ and $e\neq e_i,$ then $e,e_i\in I,$ $[e-e^*, e_i-e^*_i]\neq 0,$ a contradiction. Hence $E(V,v_i)=\{e_i\}.$ Similarly if $e\in E(V,v),$ then $e\in I,$
$[e-e^*,e_1-e^*_1]\neq0$ by Lemma~\ref{le1}(i). Hence $v$ is a source in $\Gamma$ and $E(v,V)=\{e_1,\ldots,e_n\}.$ Thus the subgraph $I_n$  is isolated in
$\Gamma,$ so $\Gamma=I_n$. Hence $L(\Gamma)\cong\oplus_{1}^{n} M_2(F)$ and hence $[K,K]=0.$ Contradiction.

Let  $I_0$  be a connected component of $(W,E(W,W)).$ Then $v$ is a sink in $\Gamma.$ Suppose that $v_1\in V\setminus W,$ and $e\in E$ with $s(e)=v_1$ and $r(e)=v.$

If $f\in E(V,v)$ and $e\neq f,$ then $e,f\in I$ and $[e-e^*,f-f^*]\neq 0$ by Lemma~\ref{le1}(ii), a contradiction. Hence $E(V,v)=\{e\}.$
\newline If $g\in E(V,v_1),$ then $ge\in I$ and $[ge-e^*g^*,e-e^*]=gee^*-ee^*g^*\neq0,$  a contradiction. Hence $v_1$ is a source. Thus $e$ is a fiber.  But we assumed that $\Gamma$ is fiber-less,
a contradiction. Thus $[K_{I},K_{I}]\neq (0).$
\end{proof}

\begin{lemma}\label{le5}
Let $\Gamma(V,E)$ be a fiber-less graph. Let $W$ be a nonempty hereditary and saturated  subset of $V.$ Let $I=id_{L(\Gamma)}(W)$ and let $K_{I}=K(I,*).$
Let $\Gamma'=(V\setminus W, E\setminus E(V\setminus W)).$ Let $K'=K(L(\Gamma'),*).$ If $[K,K]$ is simple,  then $[K',K']= (0).$
\end{lemma}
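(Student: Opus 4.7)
The plan is to work with the canonical surjective $*$-homomorphism $\phi\colon L(\Gamma)\twoheadrightarrow L(\Gamma)/I\cong L(\Gamma')$. Because $\operatorname{char} F\neq 2$, any $b\in K'$ lifts to some $a\in L(\Gamma)$ and then $\tfrac12(a-a^*)\in K$ still maps to $b$; hence $\phi$ restricts to a surjection $K\twoheadrightarrow K'$ with kernel $K\cap I=K_I$. In particular $[K',K']=\phi([K,K])$, so the goal reduces to showing $[K,K]\subseteq K_I$.

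I would then examine the intersection $[K,K]\cap K_I$. Since $I$ is a two-sided associative ideal, $K_I$ is a Lie ideal of $K$: for $x\in K$ and $y\in K_I$ we have $[x,y]\in I\cap K=K_I$. It follows that $[K,K]\cap K_I$ is a Lie ideal of the Lie algebra $[K,K]$, and the simplicity of $[K,K]$ forces $[K,K]\cap K_I$ to be either $(0)$ or the whole of $[K,K]$. To rule out the first alternative, I would observe that $K_I$ is closed under the bracket (because $I$ is closed under multiplication), so $[K_I,K_I]\subseteq [K,K]\cap K_I$; by Lemma~\ref{le4} this is nonzero. Therefore $[K,K]\cap K_I=[K,K]$, i.e.\ $[K,K]\subseteq K_I\subseteq I$, and applying $\phi$ gives $[K',K']=(0)$.

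The one subtlety worth flagging is that Lemma~\ref{le4} is stated for \emph{connected} fiber-less graphs, whereas Lemma~\ref{le5} drops the connectedness hypothesis. In practice, simplicity of $[K,K]$ should force all the nontrivial behaviour to happen on a single connected component of $\Gamma$, so the conclusion $[K_I,K_I]\neq (0)$ remains available; I would either apply Lemma~\ref{le4} on that component or verify that its proof never genuinely used connectedness beyond what is already implicit here. Apart from this bookkeeping, the main engine of the argument is the standard pattern: pull back the problem on $\Gamma'$ to a question about the Lie ideal $K_I\subseteq K$, and let simplicity of $[K,K]$ together with the non-triviality supplied by Lemma~\ref{le4} do the rest.
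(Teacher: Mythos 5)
Your proposal is correct and follows essentially the same route as the paper: the paper likewise invokes Lemma~\ref{le4} to get $[K_I,K_I]\neq(0)$, notes it sits as a nonzero Lie ideal inside $[K,K]$, uses simplicity to conclude $[K,K]=[K_I,K_I]\subseteq I$, and then reads off $[K',K']=(0)$ in the quotient $L(\Gamma)/I\cong L(\Gamma')$; your extra care about lifting skew elements (characteristic not $2$) and about $[K,K]\cap K_I$ being the relevant Lie ideal only makes explicit what the paper leaves implicit. The connectedness mismatch you flag is real but is equally present in the paper's own proof, which applies Lemma~\ref{le4} without comment, so it is not a defect of your argument relative to theirs.
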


\begin{proof}
The Leavitt path algebra Since $L(\Gamma')$ of the quotient graph $\Gamma'$ is isomorphic to the factor algebra $L(\Gamma)/I$ (see [6, Lemma 2.3(1)].  By Lemma~\ref{le4} the commutator $[K_{I},K_{I}]$ is a nonzero ideal in $[K,K].$ Since $[K,K]$  is simple it follows that $[K,K]=[K_{I},K_{I}]\subseteq I.$
Hence  $[K',K']= (0).$
\end{proof}

\begin{corollary}
Let $\Gamma(V,E)$ be a fiber-less graph. Let $W$ be a nonempty hereditary and saturated  subset of $V.$ Let $I=id_{L(\Gamma)}(W)$ and let $K_{I}=K(I,*).$
Let $\Gamma'=(V\setminus W, E\setminus E(V\setminus W)).$
If $[K,K]$ is simple,  then $\Gamma'$ is a disjoint union of $I_0$, $C_1,$  and $I_n.$
\end{corollary}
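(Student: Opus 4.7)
The proof is essentially immediate by combining the two preceding lemmas, so the plan is short.

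First I would invoke Lemma~\ref{le5}: under the given hypotheses (fiber-less $\Gamma$, nonempty hereditary saturated $W$, and $[K,K]$ simple), the conclusion is $[K',K']=(0)$, where $K'=K(L(\Gamma'),*)$. Nothing further needs to be done to obtain this; we simply apply the lemma verbatim.

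Next I would feed this into Lemma~\ref{le2}. That lemma asserts that whenever the derived Lie algebra of the skew-symmetric part vanishes for a Leavitt path algebra, the underlying graph decomposes as a disjoint union of isolated vertices ($I_0$), loops ($C_1$), and forks ($I_n$). Applying this to $\Gamma'$ (with $K$ replaced by $K'$) gives precisely the stated decomposition.

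The main (very mild) obstacle is not technical but bookkeeping: one must check that Lemma~\ref{le2} is genuinely applicable to $\Gamma'$, i.e.\ that $\Gamma'$ is a legitimate row-finite directed graph to which the Leavitt-path-algebra framework applies. This follows from the quotient construction recalled in the preliminaries ($L(\Gamma)/I \cong L(\Gamma')$ via \cite{[6, Lemma 2.3(1)]}$\,$), which is also what underlies the proof of Lemma~\ref{le5}. Once that identification is in place, the two-step reduction $[K,K]\text{ simple} \Rightarrow [K',K']=0 \Rightarrow \Gamma' \text{ is } \bigsqcup I_0\sqcup\bigsqcup C_1\sqcup\bigsqcup I_n$ is immediate, and no additional computation inside $L(\Gamma)$ is required.
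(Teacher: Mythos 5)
Your proposal is correct and matches the paper's intended argument: the corollary is stated without proof precisely because it is the immediate combination of Lemma~\ref{le5} (giving $[K',K']=(0)$) with Lemma~\ref{le2} applied to the quotient graph $\Gamma'$ (via $L(\Gamma)/I\cong L(\Gamma')$). Nothing further is needed.
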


\begin{lemma}\label{le6}
Let $\Gamma(V,E)$ be a fiber-less graph. Let $W$ be a nonempty hereditary and saturated  subset of $V.$
Let $\Gamma'=(V\setminus W, E\setminus E(V\setminus W)).$ If $[K,K]$ is  simple,
then $\Gamma'$ does not have connected components of the types  $I_0$ and $I_n.$
\end{lemma}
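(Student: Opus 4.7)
My plan is to argue by contradiction: assuming $\Gamma'$ contains a connected component of type $I_0$ or of type $I_n$, I will derive either a violation of the saturation of $W$ or the existence of a fiber in $\Gamma$. The tools are elementary but must be used carefully. The key structural observations I will invoke repeatedly are that, by the hereditary property of $W$, no edge of $\Gamma$ with source in $W$ can have range outside $W$, and that because $\Gamma' = (V\setminus W,\,E\setminus E(V,W))$, an edge of $\Gamma$ incident to a vertex of $V\setminus W$ is missing from $\Gamma'$ precisely when its range lies in $W$.

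Suppose first that $\{v\}$ is an $I_0$ component of $\Gamma'$. Heredity rules out edges into $v$ from $W$, and isolation in $\Gamma'$ rules out edges into $v$ from $V\setminus W$, so $v$ is a source of $\Gamma$. By the same comparison between $\Gamma$ and $\Gamma'$, every outgoing edge of $v$ in $\Gamma$ must have range in $W$, i.e.\ $E(v,V)=E(v,W)$. Invoking the tacit connectedness convention of the paper (or, equivalently, passing to the direct summand of $L(\Gamma)$ supporting the nonzero Lie ideal $[K,K]$), this set is nonempty. Then $v$ is a non-sink of $\Gamma$ with $r(s^{-1}(v))\subseteq W$, and the saturation of $W$ forces $v\in W$, contradicting $v\in V\setminus W$.

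Now suppose an $I_n$ component of $\Gamma'$ has source $v$ and sinks $u_1,\ldots,u_n$ joined by edges $f_i$ from $v$ to $u_i$. The same heredity/isolation argument as above shows that $v$ is a source of $\Gamma$. For each $u_i$, any outgoing edge of $u_i$ in $\Gamma$ must have range in $W$ (otherwise it would lie in $\Gamma'$, where $u_i$ is already a sink); but then $u_i$ would be a non-sink with $r(s^{-1}(u_i))\subseteq W$, so the saturation of $W$ would place $u_i\in W$, a contradiction. Hence each $u_i$ is a sink of $\Gamma$. Finally, every edge of $\Gamma$ ending at $u_i$ must originate in $V\setminus W$ by heredity and so lies in $\Gamma'$, where the only such edge is $f_i$; therefore $E(V,u_i)=\{f_i\}$. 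The three conditions that $s(f_i)$ is a source, $r(f_i)$ is a sink, and $E(V,r(f_i))=\{f_i\}$ say exactly that $f_i$ is a fiber, contradicting the hypothesis that $\Gamma$ is fiber-less.

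The only delicate point I anticipate is the bookkeeping in the $I_0$ subcase where $v$ might have no outgoing edges in $\Gamma$ either: formally saturation says nothing in that situation, and such a $v$ must be discarded using connectedness (an isolated $v$ would form its own connected component of $\Gamma$, contributing only a scalar summand $F$ to $L(\Gamma)$ and nothing to $[K,K]$). Apart from this caveat, both cases reduce cleanly to combining the hereditary and saturated conditions on $W$ with the definition of a fiber; the simplicity of $[K,K]$ is not used again directly, entering only through the structural hypotheses established in the earlier lemmas of this section.
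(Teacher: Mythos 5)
Your proof is correct and is essentially the paper's own argument: in both cases you combine heredity and saturation of $W$ with the structure of $\Gamma'$ to show that an $I_0$ vertex would be isolated (handled, as in the paper, by the tacit connectedness assumption) and that the edges of an $I_n$ component would be fibers, contradicting fiber-lessness. The only difference is cosmetic ordering in the $I_0$ case (source first versus sink first), and your explicit flagging of the no-outgoing-edge subcase matches the paper's implicit use of connectedness.
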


\begin{proof}
Let $I_0$  be a component of $\Gamma'.$ Then $E(v,V\setminus W)=\emptyset.$
If $s^{-1}(v)\neq\emptyset,$ then $r(s^{-1}(v))\subset W$ and since $W$ is saturated we will have $v\in W.$
Hence $v$ is a sink. Since $v$ is isolated in $\Gamma',$ it follows that  $E(V\setminus W, v)=\emptyset.$
Also since $v\notin W$ and $E(V\setminus W, v)=\emptyset,$ then $E(W,v)=\emptyset.$
Hence $E(V,v)=\emptyset$ and hence $v$ is isolated in $\Gamma,$ a contradiction.

Now suppose that $I_n$  is a component of $\Gamma'.$
Since $E(v_i,V\setminus W)=\emptyset,\, 1\leq i\leq n, $ it follows that $E(v_i, V)=E(v_i, W).$
Since $v_i\notin W$ we conclude that each $v_i$ is a sink. If $e$ is another edge different from $e_i$ arriving at $v_i,$ then  $s(e)$
must lie in  $W$ and hence $v_i$ belongs to $W,$  which is a contradiction. Now suppose that an edge $e$ arrives at $v.$
Again $s(e)$ can not be in $W$ and can not be in $V\setminus W.$ Hence $v$ is a source in $\Gamma.$
But then $e_1,\ldots,e_n$ are fibers. A contradiction.
\end{proof}

\begin{lemma}\label{le7}
Let $\Gamma(V,E)$ be a fiber-less graph. Let $W$ be a nonempty hereditary and saturated  subset of $V.$
Let $\Gamma'=(V\setminus W, E\setminus E(V\setminus W)).$ Suppose that $[K,K]$ is  simple, and $\Gamma'$ is a disjoint union of loops \includegraphics[width=.08\textwidth]{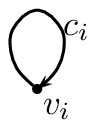}, indexed by elements of a set $\Omega,$ $i\in\Omega.$ Then $v_i,\,i\in\Omega$ are balloons over $W.$
\end{lemma}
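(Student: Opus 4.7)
The plan is to verify the five conditions of Definition~\ref{def3} for each $v_i$, $i\in\Omega$, where $C_i$ denotes the loop at $v_i$ in the component of $\Gamma'$ containing $v_i$. Condition (i) holds because $v_i$ is by hypothesis a vertex of $\Gamma'=(V\setminus W,\ldots)$, hence $v_i\notin W$. Condition (ii) is immediate since the connected component of $v_i$ in $\Gamma'$ is a single vertex with a self-loop, yielding $C_i\in E(v_i,v_i)\subseteq E$.

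For conditions (iv) and (v) the strategy is to match edges at $v_i$ in $\Gamma$ against edges at $v_i$ in $\Gamma'$, using that $\Gamma'$ is obtained from $\Gamma$ by deleting the vertices of $W$ together with the edges in $E(V,W)$. For (iv), given $e\in E$ with $s(e)=v_i$, either $r(e)\in W$ (so $e\in E(v_i,W)$) or $r(e)\in V\setminus W$; in the latter case $e$ survives in $\Gamma'$, and since the component of $v_i$ there is a single-vertex loop, $e=C_i$. For (v), given $e\in E$ with $r(e)=v_i$, the hereditary property of $W$ rules out $s(e)\in W$ (otherwise $v_i$ would be a descendant of a vertex in $W$, forcing $v_i\in W$); hence $e$ also survives in $\Gamma'$ and the same component argument gives $e=C_i$.

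The main obstacle is (iii), the nonemptiness of $E(v_i,W)$. I would argue by contradiction. Suppose $E(v_i,W)=\emptyset$; then (iv) yields $E(v_i,V)=\{C_i\}$ and (v) yields $E(V,v_i)=\{C_i\}$, so $\{v_i,C_i\}$ is an isolated connected subgraph of $\Gamma$. Under the standing assumption that $\Gamma$ is connected, $W\neq\emptyset$ together with $v_i\notin W$ produces a vertex in $V\setminus\{v_i\}$, contradicting isolation. If connectedness is not to be invoked directly, an alternative is to take $W'=\{v_i\}$, which is then hereditary and saturated; the Leavitt relations $C_iC_i^*=v_i=C_i^*C_i$ make the ideal $J=\mathrm{id}_{L(\Gamma)}(v_i)$ isomorphic to the commutative algebra $F[x,x^{-1}]$, so $[K_J,K_J]=(0)$, contradicting Lemma~\ref{le4} applied to $W'$. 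Either route completes the verification of (iii) and hence of the balloon conditions.
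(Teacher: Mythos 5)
Your proposal is correct and follows essentially the same route as the paper: conditions (iv) and (v) come from $W$ being hereditary (so $E(W,v_i)=\emptyset$) together with the fact that edges at $v_i$ avoiding $W$ survive in $\Gamma'$ and hence must equal the loop $c_i$, while (iii) follows because $E(v_i,W)=\emptyset$ would leave $v_i$ isolated, contradicting the standing connectedness of $\Gamma$ (which the paper also uses implicitly here). Your alternative argument for (iii) via Lemma~\ref{le4} is only a minor variation, since that lemma likewise assumes connectedness, so the two proofs are essentially identical.
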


\begin{proof}
Since $E(W,v_i)=\emptyset$ and $E(V\setminus W,v_i)=\{c_i\}$ as in $\Gamma'$ it follows that  $E(V,v_i)=\{c_i\}.$
Moreover, $E(v_i,V\setminus W)=\{c_i\}$ as in $\Gamma'.$ If $E(v_i,W)=\emptyset,$ then $v_i$ is isolated in $\Gamma,$
hence $E(v_i,W)\neq\emptyset.$ Thus $v_i$ is a balloon over $W.$
\end{proof}

\begin{lemma}\label{le8}
Let $d\geq 1$ be a positive integer. Then the Lie algebra\newline  $[K(M_d(F[t,t^{-1}]),*),K(M_d(F[t,t^{-1}]),*)]$ is not simple.
\end{lemma}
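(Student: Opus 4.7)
The plan is to produce a $\ast$-invariant associative ideal $I$ of $M_d(F[t,t^{-1}])$ whose intersection with $[K,K]$ is a nonzero proper Lie ideal of $[K,K]$, witnessing non-simplicity. The case $d=1$ is immediate: the algebra is commutative, so $K$ is abelian and $[K,K]=0$, which is not simple. From now on I assume $d\geq 2$.

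Identifying $M_d(F[t,t^{-1}])$ with the Leavitt path algebra of a pure $d$-cycle, the induced involution acts on matrices by $A(t)\mapsto A(t^{-1})^{T}$. The central Laurent polynomial $t-t^{-1}$ satisfies $(t-t^{-1})^{\ast}=-(t-t^{-1})$, hence its square is $\ast$-symmetric and central. Set
\[
I \;=\; (t-t^{-1})^{2}\, M_d(F[t,t^{-1}]).
\]
Then $I$ is a $\ast$-invariant proper two-sided associative ideal, so $I\cap K$ is a Lie ideal of $K$, and it follows at once that $I\cap[K,K]$ is a Lie ideal of $[K,K]$.

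Two verifications then remain. For the proper inclusion $I\cap[K,K]\neq[K,K]$, I would take $X'=E_{12}-E_{21}$ and $Y'=tE_{12}-t^{-1}E_{21}$, both visibly in $K$, and compute $[X',Y']=(t-t^{-1})(E_{11}-E_{22})$; this element lies in $[K,K]$ but not in $I$, since $t-t^{-1}$ is not divisible by $(t-t^{-1})^{2}$ in $F[t,t^{-1}]$. For nonzeroness, I would load an extra skew factor into one matrix: with $X=(t^{2}-t^{-2})(E_{12}+E_{21})$ and $Y=(t-t^{-1})(E_{11}-E_{22})$, both in $K$, the identity $t^{2}-t^{-2}=(t-t^{-1})(t+t^{-1})$ shows that $[X,Y]$ is a nonzero scalar multiple of $(t-t^{-1})^{2}(t+t^{-1})(E_{12}-E_{21})$, which lies in $I$.

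The main (modest) obstacle is the nonzeroness of $I\cap[K,K]$. A generic commutator $[X,Y]$ with $X,Y\in K$ naturally carries only one power of $t-t^{-1}$, so one must engineer $X$ or $Y$ to have an extra skew factor in its coefficient in order to reach the \emph{second} power of the ideal generated by $t-t^{-1}$; this is precisely the role of the factorization $t^{2}-t^{-2}=(t-t^{-1})(t+t^{-1})$ above. Once both endpoints are produced, $I\cap[K,K]$ is a nonzero proper Lie ideal of $[K,K]$, so $[K,K]$ is not simple.
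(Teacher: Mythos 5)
Your argument is correct, and it takes a genuinely different route from the paper. The paper fixes the descending chain of $\ast$-invariant ideals $J_n=((1-t)^n)\subseteq F[t,t^{-1}]$, shows by an explicit $2\times 2$ commutator of matrices $\left(\begin{smallmatrix}0 & f(t)\\ -f(t^{-1}) & 0\end{smallmatrix}\right)$ that $[K(M_d(J_n),\ast),K(M_d(J_n),\ast)]\neq(0)$ for every $n$, and then argues by contradiction: simplicity would force $[K,K]\subseteq M_d(J_n)$ for all $n$, while $\bigcap_n J_n=(0)$. You instead exhibit a single $\ast$-invariant associative ideal $I=(t-t^{-1})^2M_d(F[t,t^{-1}])$ and verify directly that $I\cap[K,K]$ is a nonzero proper Lie ideal, with two explicit commutators: $[E_{12}-E_{21},\,tE_{12}-t^{-1}E_{21}]=(t-t^{-1})(E_{11}-E_{22})\notin I$ for properness, and $[(t^2-t^{-2})(E_{12}+E_{21}),\,(t-t^{-1})(E_{11}-E_{22})]=-2(t-t^{-1})^2(t+t^{-1})(E_{12}-E_{21})\in I$ for nonzeroness (your computations check out, and the embedding of these $2\times2$ witnesses into $M_d$, $d\geq2$, is harmless). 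What each buys: yours is more constructive and self-contained, avoiding the intersection-and-contradiction scheme entirely; the paper's chain $J_n$ is not wasted effort, since exactly the same mechanism is recycled in Lemma~\ref{le9} to rule out cycles without exits, so it gives a uniform tool there. Two small remarks: your nonzeroness witness carries the scalar $-2$, so it uses the standing hypothesis $\operatorname{char}F\neq2$ (this could be avoided, e.g.\ by multiplying the skew element $E_{12}-E_{21}$ by the central symmetric element $(t-t^{-1})^2$ and commuting with $tE_{12}-t^{-1}E_{21}$); and your verification that $I\cap[K,K]$ is a Lie ideal of $[K,K]$ quietly uses $[K,K]\subseteq K$, which holds and is worth stating.
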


\begin{proof}
Note that the standard involution on $K(M_d(F[t,t^{-1}]))$ is $\left(f_{ij}(t)\right)^*=\left(f_{ji}(t^{-1})\right).$
For $d=1,$ $[K(F[t,t^{-1}]),K(F[t,t^{-1}])]=0.$ Let $d\geq 2.$ Let $J_n$ be the ideal of $F[t,t^{-1}]$ generated by $(1-t)^n.$
Since $((1-t)^n)^*=((1-t)^*)^n=(1-t^{-1})^n=(-1)^nt^{-n}(1-t)^n,$ it follows that $J^*=J.$ We will show that $[K(M_d(J_n),*),K(M_d(J_n),*)]\neq (0)$ for all $n\geq 1.$
It is sufficient to do it for $d=2.$ For $f,g\in F[t,t^{-1}],$ we have { \scriptsize{$$\left[\left(
                                                                                  \begin{array}{cc}
                                                                                    0 & f(t) \\
                                                                                    -f(t^{-1}) & 0 \\
                                                                                  \end{array}
                                                                                \right), \left(
                                                                                           \begin{array}{cc}
                                                                                             0 &  g(t) \\
                                                                                              -g(t^{-1}) & 0 \\
                                                                                           \end{array}
                                                                                         \right)\right]=\left(
                                                                                                          \begin{array}{cc}
                                                                                                            g(t)f(t^{-1})-f(t)g(t^{-1}) &0 \\
                                                                                                            0 & f(t)g(t^{-1})-f(t^{-1})g(t) \\
                                                                                                          \end{array}
                                                                                                        \right)$$ }}

Let $f(t)=(1-t)^n$ and $g(t)=(1-t)^m$ where $n<m.$ Then $f,g\in J_n$ and  the matrix on the right hand side is nonzero.
If  $[K(M_d(F[t,t^{-1}]),*),K(M_d(F[t,t^{-1}]),*)]$ is  simple, then it is equal to $[K(M_d(J_n),*),K(M_d(J_n),*)]$   and therefore lies in $M_d(J_n).$
But $\bigcap_{n} J_n=(0),$ a contradiction. \newline Hence   $[K(M_d(F[t,t^{-1}]),*),K(M_d(F[t,t^{-1}]),*)]$ is not simple.
\end{proof}

\begin{lemma}\label{le9}
Let $\Gamma(V,E)$ be a fiber-less graph. If $[K,K]$ is simple, then every cycle in $\Gamma$ has an exit or  is a loop.
\end{lemma}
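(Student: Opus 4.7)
The plan is to argue by contradiction: assume $C = e_1\cdots e_d$ is a cycle of length $d\ge 2$ in $\Gamma$ without exit and reach a contradiction with the simplicity of $[K,K]$ via Lemma~\ref{le8}.

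Because $C$ has no exit, every $v_i = s(e_i)$ is a non-sink whose unique outgoing edge is $e_i$, so the Leavitt relation forces $v_i = e_i e_i^*$ in $L(\Gamma)$. The $*$-subalgebra $B \subseteq L(\Gamma)$ generated by $\{v_i, e_i, e_i^* : 1 \le i \le d\}$ therefore satisfies precisely the defining relations of the Leavitt path algebra $L(C_d)$ of the bare $d$-cycle; using the path basis of $L(\Gamma)$ from [4, Theorem~1], one checks that this produces a $*$-isomorphism $B \cong L(C_d) \cong M_d(F[t,t^{-1}])$ carrying the standard involution $(f_{ij}(t))^* = (f_{ji}(t^{-1}))$, with $C$ corresponding to $t$ in a suitable corner.

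Let $\bar W$ be the hereditary-saturated closure of $V(C)$ (hereditary since $C$ has no exit) and set $I = id_{L(\Gamma)}(\bar W)$, a $*$-invariant ideal containing $B$. By Lemma~\ref{le4}, $[K_I,K_I] \ne 0$, and as a Lie ideal of the simple algebra $[K,K]$ it must coincide with $[K,K]$. Invoking the structure theory of Leavitt path algebras around a cycle without exit (and using that $v_1 L(\Gamma) v_1 = F[C,C^*] \cong F[t,t^{-1}]$ because no path leaves $v_1$ outside $V(C)$), one obtains a $*$-algebra isomorphism $I \cong M_\Lambda(F[t,t^{-1}])$ with the standard involution, for an index set $\Lambda$ parameterising paths in $\Gamma$ ending in $V(C)$ and not traversing a full cycle; in particular $|\Lambda| \ge d \ge 2$. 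Under this identification the element $(v_1 - C)^n \in B$ corresponds to $(1-t)^n$ in the $(v_1,v_1)$-corner, and a short calculation using $(v_1-C)^n = v_1 (v_1-C)^n v_1$ together with $v_1 \in I$ shows that the ideal $I_n = id_{L(\Gamma)}((v_1 - C)^n)$ equals $M_\Lambda(J_n)$, where $J_n = ((1-t)^n)$. Since $J_n$ is $*$-invariant (as in Lemma~\ref{le8}), so is $I_n$.

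Finally, the $2 \times 2$ commutator computation in the proof of Lemma~\ref{le8} adapts verbatim to show $[K_{I_n},K_{I_n}] \ne 0$ for every $n$ (it uses only $|\Lambda| \ge 2$). Simplicity of $[K,K]$ then forces $[K_{I_n},K_{I_n}] = [K,K]$, so $[K,K] \subseteq I_n$ for all $n$, hence $[K,K] \subseteq \bigcap_n I_n = M_\Lambda\bigl(\bigcap_n J_n\bigr) = 0$, contradicting the nonvanishing of $[K,K]$. The principal obstacle is establishing the $*$-isomorphism $I \cong M_\Lambda(F[t,t^{-1}])$ with the correct (standard) involution together with the identification $I_n = M_\Lambda(J_n)$; once these are secured, the argument reduces to a direct analogue of the proof of Lemma~\ref{le8}.
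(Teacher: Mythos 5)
Your proposal is correct in substance and rests on the same engine as the paper's proof: the chain of $*$-invariant ideals built from $(1-t)^n$, the $2\times 2$ skew-commutator computation of Lemma~\ref{le8}, and simplicity of $[K,K]$ forcing $[K,K]$ into an intersection of ideals that is zero. The technical packaging, however, is genuinely different. The paper never describes the full ideal generated by $V(C)$: it works in the corner $aL(\Gamma)a\cong L(C)\cong M_d(F[t,t^{-1}])$ (with $a$ the sum of the vertices of $C$), sets $I_n=id_{L(\Gamma)}(M_d(J_n))$, and uses the elementary containment $a\,id_{L(\Gamma)}(I)\,a\subseteq I$ to conclude $[K,K]\cap L(C)\subseteq M_d(J_n)$ for all $n$, hence $[K,K]\cap L(C)=0$, contradicting $[K(L(C)),K(L(C))]\neq 0$ for $d\geq 2$. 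You instead invoke the structure theorem that the ideal $I$ generated by $V(C)$ (equivalently by its saturated closure, since saturating a hereditary set does not change the ideal) is $*$-isomorphic to $M_\Lambda(F[t,t^{-1}])$ with the standard involution, identify $I_n=id_{L(\Gamma)}((v_1-C)^n)$ with $M_\Lambda(J_n)$, and then squeeze $[K,K]\subseteq\bigcap_n M_\Lambda(J_n)=0$ directly. That structure statement is true and provable by the explicit basis $\{\lambda C^{k}\mu^{*}\}$ with $\lambda,\mu$ paths ending at the fixed base vertex $v_1$ and not containing the full cycle (your indexing ``paths ending in $V(C)$'' should be sharpened to this), and the involution does match the standard one under that map; but as written it is the one step you assert rather than argue, and you flag it yourself as the principal obstacle. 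The paper's corner argument buys you exactly enough ($a I_n a\subseteq M_d(J_n)$) without ever needing this heavier input, at the cost of a slightly less direct endgame. Note also that your argument treats only $d\geq 2$, which is precisely the statement of the lemma, whereas the paper's proof continues to analyze exit-less loops with an entering edge and actually establishes more than is claimed.
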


\begin{proof}
Let $C$ be a cycle, which does not have an exit. Let $a=\sum_{i}v_i$ be the sum of all vertices on $C.$ Then $a$ is an idempotent and $aL(\Gamma)a\cong L(C).$
In [3, Prop. 3.5(iii)] it was shown that the Leavitt path algebra of a cycle with $d$ vertices is isomorphic to $M_d(F[t,t^{-1}]).$
For an arbitrary ideal $I\lhd L(C)$ denote $\widetilde{I} =id_{L(\Gamma)}(I).$
Then $a\widetilde{I}a\subseteq I.$ If $I^*=I,$ then $\widetilde{I}^*=\widetilde{I}.$ Let $J_n$ be the ideal of $F[t,t^{-1}]$ generated by $(1-t)^n.$
Let $I_n=id_{L(\Gamma)}(M_d(J_n)).$ As shown in the proof of Lemma \ref{le8}, $[K(I_n,*),K(I_n,*)]\neq (0).$ Since $[K,K]$ is simple it follows that $[K,K]=[K(I_n,*),K(I_n,*)].$
In particular, $[K,K]\cap L(C)\subseteq M_d(J_n).$ Since $\cap_n J_n=(0)$ we conclude that $[K,K]\cap L(C)=(0).$
By Lemma~\ref{le8} we have $(0)\neq [K(M_d(F[t,t^{-1}]),*),K(M_d(F[t,t^{-1}]),*)]\subseteq M_d(J_n) $ for $d\geq 2.$
Hence $d=1.$ Thus $C$ is a loop. If there is no edge arriving at $C$ from outside , then $C$ is isolated in $\Gamma$ and hence $\Gamma=C,$
which is a contradiction, since $[K,K]$ is simple. Hence there is an edge $e$ such that $s(e)\notin C$ and $r(e)=v\in V(C).$
Let $J_n$ be the ideal generated by $(v-C)^n$ in $L(C).$  Let $I_n=id_{L(\Gamma)}(J_n).$ Let $f,g\in J_n.$  Now, $ef,eg\in I_n.$
We have $[ef-(ef)^*,eg-(eg)^*]=-efg^*e^*-f^*g+egf^*e^*+gf^*\neq0.$ Now, $[K,K]=[K(I_n,*),K(I_n,*)]$ and $v[K,K]v\subseteq J_n.$
Since $\cap_n J_n=(0)$ it follows that $v[K,K]v=(0).$ But $v[ef-(ef)^*,eg-(eg)^*]v=g^*f-f^*g\neq 0$ for some $f,g\in L(C),$ a contradiction.
Hence $C$ has an exit.

\end{proof}

Let $\Gamma(V,E)$  be a fiber-less connected graph such that the Lie algebra \newline $[K(L(\Gamma),*),K(L(\Gamma),*)]$ is simple. Let $W_1, W_2$ be two nonempty hereditary saturated subsets of $V$ such that $W_1\cap W_2=\emptyset.$ If $w\in W_2,$ then by Lemma \ref{le7} the vertex $w$ is a balloon over $W_1.$ It means that there exist a loop \includegraphics[width=.08\textwidth]{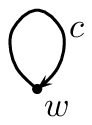}, $r^{-1}(w)=\{c\},$ all descendants of $w$ except $w$ itself lie in $W_2.$ Since $W_2$ is hereditary and $W_1\cap W_2=\emptyset$ it follows that the vertex $w$ is isolated in $\Gamma.$ Since $\Gamma$ is connected, $V=\{w\},$
a contradiction.  We showed that any two nonempty hereditary saturated subsets of $V$ intersect. Now let $\{W_i\}$ be the family of all nonempty hereditary saturated subsets of $V.$ We claim that $\cap_i W_i\neq\emptyset.$ Indeed, choose a vertex $v\in V.$  If $\cap_i W_i=\emptyset$ then there exists a subset $W_i$ such that $r(s^{-1}(v))\cap W_i=\emptyset.$ By Lemma \ref{le7} the vertex $v$ is a balloon over $W_i.$
Since $W_i$ does not contain any range of an edge originating at $v$ we conclude, as above, that the vertex $v$ is isolated in $V,$ a contradiction. Thus $W=\cap_i W_i $ is the smallest nonempty hereditary saturated subset of $V.$ Since every hereditary and  saturated subset of $W$ is a hereditary and  saturated subset of $V$ it follows that $W$ does not contains proper hereditary and  saturated subsets. Now, if $[K,K]$ is simple then by this and Lemma~\ref{le9} we see that $\Gamma_W=(W,E(W,W))$ satisfies the conditions of [1]. Hence $L(\Gamma_W)$ is a simple algebra. Hence we proved Theorem \ref{th1} in one direction: if the Lie algebra$[K,K]$ is simple then the graph $\Gamma$ is almost simple.

\section{ Simplicity of the Lie algebra $[K,K].$}

Now our aim is to prove that for an almost simple graph $\Gamma(V,E)$ the Lie algebra $[K(L(\Gamma),*), K(L(\Gamma),*)]$ is simple. \newline Recall that being almost simple means that $V\supset V_1\supset V_2,$ the graph $(V_2,E(V_2,V_2))$ is simple; every vertex from $V_1\setminus V_2$ is a balloon over $V_1;$ every vertex $v\in V\setminus V_1$ is a source, $E(v,V)=E(v,V_1)$ consists of one edge, which is a fiber.
\newline By Corollary \ref{cr1} of Lemma \ref{le3} we have $[K(L(\Gamma),*),K(L(\Gamma),*)]\cong[K(L(\Gamma_1),*),K(L(\Gamma_1),*)],$ where $\Gamma_1=(V_1, E(V_1,V_1)).$ \newline Therefore without loss of generality we will assume that the set of vertices $V$ of our row finite graph $\Gamma(V,E)$ contains a nonempty subset $W,$ such that the Leavitt path algebra $L(W,E(W,W))$ is simple and each vertex from $V\setminus W$ is a balloon over $ W.$
Our aim is to show that the Lie algebra $[K(L(\Gamma),*), K(L(\Gamma),*)]$ is simple.
\begin{center}
\includegraphics[width=.4\textwidth]{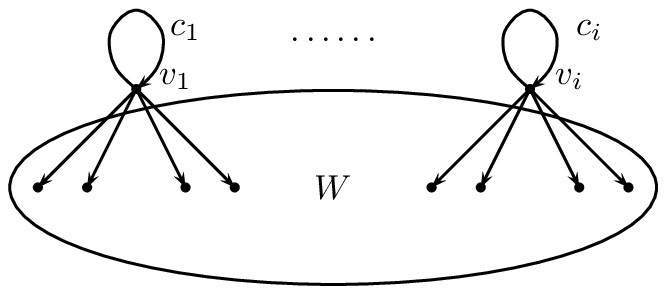}
\end{center}
The subset $W$ in $V$ is hereditary and  saturated. Let $I=id_{L(\Gamma)}(W).$ Then $L(\Gamma)/I$ is a direct sum of $\text{card} (V\setminus W)$- many copies of the algebra $F[t,t^{-1}].$ Denote $E_i=E(v_i,W),$ $v_i\in V\setminus W,$ $c_i$ is a loop from $E(v_i,v_i).$ Then
$I=L(W)+\sum_{i} c_{i}^{k_i}E_iL(W)+\sum_{j}L(W)E^*_{j}(c_{j}^*)^{r_j}+\sum_{i,j}c_{i}^{k_i}E_iL(W)E^*_{j}(c_{j}^*)^{r_j}$
This decomposition is related to the  family of pairwise orthogonal idempotents $\{u=\sum_{v\in W}v\}\bigcup^{.}(\cup_{i}^{.}\mathcal{E}_i),$
$\mathcal{E}_i=\{ c_i^kee^*(c_i^*)^k| k\geq 0, e\in E_i\}.$

\begin{lemma}\label{le10}
$I$ is a simple algebra.
\end{lemma}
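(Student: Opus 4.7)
The plan is to establish simplicity of $I$ by showing that every nonzero two-sided ideal $J$ of $I$ coincides with $I$. The argument proceeds in three stages: first, extract a nonzero element of $J$ that lies in $L(W)$; second, invoke the hypothesis that $L(W)$ is simple to conclude $L(W)\subseteq J$; third, use the fact that $L(W)$, together with the elements $c_i^{k_i}e$ and $f^*(c_j^*)^{r_j}$, generates $I$ multiplicatively to obtain $I\subseteq J$.

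For the first stage, take any nonzero $x\in J$ and expand it in the standard monomial basis of $L(\Gamma)$ from [4, Theorem 1]. Because $W$ is hereditary and every vertex of $V\setminus W$ is a balloon over $W$, any path $p$ in $\Gamma$ with $r(p)\in W$ factors uniquely as $p=c_i^{k_i}e\cdot p'$, where $p'$ is a path lying in $W$, with the convention that the prefix $c_i^{k_i}e$ is replaced by the vertex idempotent $s(p)\in W$ whenever $s(p)$ already lies in $W$. The same factorization applies to $q$. Thus every basis monomial $p_mq_m^*\in I$ has a well-defined prefix $A$ and suffix $B^*$ with $p_mq_m^*=A\cdot p'_m(q'_m)^*\cdot B^*$, where $p'_m(q'_m)^*\in L(W)$. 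Choose a monomial of $x$ with nonzero coefficient and denote its prefix and suffix by $A$ and $B^*$; then $A,B\in I$ because $r(A),r(B)\in W$. Using relation (3) together with the identities $c_i^*w=wc_i=0$ for $w\in W$ (consequences of orthogonality of vertex idempotents) and $(c_i^*)^{k_i}c_i^{k_i}=v_i$, a routine case analysis shows that left multiplication by $A^*$ annihilates every monomial whose prefix differs from $A$ and strips the matching prefix off the remaining monomials; the symmetric statement holds for right multiplication by $B$. Consequently $A^*xB\in J\cap L(W)$ is a nonzero $F$-linear combination of pairwise distinct basis monomials of $L(W)$, so it is nonzero.

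For the second stage, since $L(W)\subseteq I$ is a subalgebra, $J\cap L(W)$ is a two-sided ideal of $L(W)$; by the first stage it is nonzero. The hypothesis that $L(W)$ is simple therefore forces $L(W)\subseteq J$. For the third stage, the explicit decomposition of $I$ displayed just before the lemma shows that every element of $I$ is a finite sum of products $ayb$ with $y\in L(W)\subseteq J$ and $a,b\in I$. Hence $I\subseteq I\cdot J\cdot I\subseteq J$, giving $J=I$, as desired.

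The principal obstacle is the first stage: one must be careful with the basis theorem for $L(\Gamma)$ both to assign coefficients unambiguously to monomials and to guarantee that, after prefix and suffix are stripped from $x$, no accidental cancellation occurs among monomials sharing the same prefix and suffix. Once this is arranged, the remaining two stages are essentially bookkeeping based on the decomposition of $I$ recorded in the paper.
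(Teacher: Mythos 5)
Your overall strategy---produce a nonzero element of $J\cap L(W)$ by stripping a prefix $c_i^{k}e$ and a suffix $f^*(c_j^*)^{l}$, invoke simplicity of $L(W)$, then regenerate $I$ from $L(W)$---is exactly the skeleton of the paper's proof; the paper merely organizes the stripping through the pairwise orthogonal idempotents $u=\sum_{v\in W}v$ and $\mathcal{E}_i=\{c_i^kee^*(c_i^*)^k\}$ and a case analysis on which corner $\mathcal{E}_iJ\mathcal{E}_j$, $uJ\mathcal{E}_j$, $\mathcal{E}_iJu$, $uJu$ is nonzero, rather than through a basis expansion of a single element.

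However, your Stage 1 has a genuine gap: you assume that when a nonzero $x\in J\subseteq I$ is expanded in the basis of [4, Theorem 1], every basis monomial that occurs has range in $W$, so that your prefix/suffix factorization applies to each of them. That is not automatic, because $I$ need not be spanned by the basis monomials it contains: the basis of [4] depends on a choice of a special edge at each non-sink vertex, and the straightening relation $v=\sum_{s(e)=v}ee^*$ can turn monomials with range in $W$ into monomials with range at a balloon vertex. Concretely, if the special edge at a balloon vertex $v_i$ is chosen to be some $e_0\in E_i$, then $\sum_{e\in E_i}ee^*=v_i-c_ic_i^*\in I$ has normal form $v_i-c_ic_i^*$, whose monomials $v_i$ and $c_ic_i^*$ have range $v_i\notin W$ and do not individually lie in $I$; your recipe assigns them no prefix or suffix, and the stripping computation does not apply to them. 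The gap is repairable in two ways: either fix the basis by taking the special edge at each balloon vertex to be its loop $c_i$ and verify that straightening any $pq^*$ with $r(p)=r(q)\in W$ then produces only monomials with range in $W$ (this uses that every special edge ending in $W$ starts in $W$); or avoid the basis altogether, as the paper does, by writing $x$ via the decomposition $I=L(W)+\sum_i c_i^{k}E_iL(W)+\sum_j L(W)E_j^*(c_j^*)^{l}+\sum_{i,j}c_i^{k}E_iL(W)E_j^*(c_j^*)^{l}$ and multiplying by the idempotents $c_i^kee^*(c_i^*)^k$ and $u$ to isolate a nonzero corner of $J$ before stripping. With either repair, your remaining two stages go through as written.
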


\begin{proof}
We have $I=uIu+\sum_{i} uI\mathcal{E}_i+\sum_{i}\mathcal{E}_iIu+\sum_{i,j}\mathcal{E}_iI\mathcal{E}_j.$
Also, we have $I=L(W)+\sum_{i} c_{i}^{k_i}E_iL(W)+\sum_{j}L(W)E^*_{j}(c_{j}^*)^{r_j}+\sum_{i,j}c_{i}^{k_i}E_iL(W)E^*_{j}(c_{j}^*)^{r_j}.$ Let $J$ be a nonzero ideal in $I.$
Suppose that $\mathcal{E}_iJ\mathcal{E}_j\neq(0).$ Thus there exist $e\in\mathcal{E}_i, f\in\mathcal{E}_j,$ $k_i,k_j\geq 0$ such that
$(0)\neq c_i^{k_i}ee^*(c_i^*)^{k_i}Jc_j^{k_j}ff^*(c_j^*)^{k_j}.$   This subspace lies in $c_i^{k_i}eL(W)f^*(c_j^*)^{k_j}\cap J.$ Multiplying on the left by $e(c_i^*)^{k_i}\in I$ and on the right by $c_j^{k_j}f\in I,$ we  get $L(W)\cap J\neq 0.$  This and the simplicity of  $L(W)$ implies $L(W)\subseteq J$ and therefore $J=I.$
Suppose that $uJ\mathcal{E}_j\neq (0).$ Then there exist an edge $e\in E_j$ and $k\geq 0$ such that $(0)\neq uJc_j^kee^*(c_j^*)^k\subseteq L(W)e^*(c_j^*)^k\cap J.$ Multiplying on the right by $c_j^ke\in I,$ we get again $L(W)\cap J\neq (0)$ and hence $I=J.$ The case $\mathcal{E}_iJu\neq(0)$ is treated similarly. If $\mathcal{E}_iJu=uJ\mathcal{E}_j= \mathcal{E}_iJ\mathcal{E}_j=(0)$ for all $i,j$ then $J\subseteq L(W)$ and hence $I=J.$

\end{proof}

\begin{lemma}\label{le11}
Let $A$ be an arbitrary simple associative algebra with an involution $*:A\to A$ and three pairwise orthogonal symmetric idempotents $e_1,e_2,e_3.$ Then $K(A,*)=e_iK(A,*)e_i+[K(A,*),K(A,*)]$ for $i=1,2,3.$
\end{lemma}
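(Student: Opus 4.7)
The plan is to reduce an arbitrary $a\in K$ to its ``$(1,1)$-block'' modulo $[K,K]$ by a Peirce decomposition. By the symmetric role of the $e_i$ it suffices to treat $i=1$. Adjoining a unit if necessary, set $e_0 = 1-e_1-e_2-e_3$; then $\{e_0,e_1,e_2,e_3\}$ are four pairwise orthogonal symmetric idempotents with $\sum e_i = 1$. Every $a\in K$ decomposes as $a = \sum_{i,j\in\{0,1,2,3\}} e_i a e_j$, and $a^*=-a$ forces $(e_i a e_j)^* = -e_j a e_i$, so
\[
a \;=\; \sum_{i} a_{ii} \;+\; \sum_{i<j}\, (a_{ij}+a_{ji}),\qquad a_{ii}\in e_i K e_i,
\]
with each off-diagonal pair $a_{ij}+a_{ji}$ skew-symmetric.

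For the off-diagonal parts, given distinct $i,j\in\{0,1,2,3\}$ pick an auxiliary index $l\in\{1,2,3\}\setminus\{i,j\}$; such an $l$ exists because $\{1,2,3\}$ has three elements. For any $\alpha,\beta\in A$, a direct computation --- in which the unwanted cross products all vanish because $e_m e_n = 0$ for $m\neq n$ --- yields
\[
\bigl[\, e_i\alpha e_l - e_l\alpha^* e_i,\; e_l\beta e_j - e_j\beta^* e_l\,\bigr] \;=\; e_i(\alpha e_l\beta)e_j \;-\; e_j(\alpha e_l\beta)^* e_i.
\]
Simplicity of $A$ gives $Ae_l A = A$, so sums $\sum_k \alpha_k e_l \beta_k$ fill out $A$; hence $e_i c e_j - e_j c^* e_i \in [K,K]$ for every $c\in A$. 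Specializing $c = e_i a e_j$ places $a_{ij}+a_{ji}$ in $[K,K]$.

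For the diagonal parts with $i\neq 1$, an analogous calculation gives
\[
\bigl[\, e_1\alpha e_i - e_i\alpha^* e_1,\; e_i\beta e_1 - e_1\beta^* e_i\,\bigr] \;=\; \bigl(e_1 c e_1 - e_1 c^* e_1\bigr)\;+\;\bigl(e_i d e_i - e_i d^* e_i\bigr),
\]
where $c = \alpha e_i \beta$ and $d = \alpha^* e_1 \beta^*$. The first summand lies in $e_1 K e_1$. As $\alpha,\beta$ vary, $d$ sweeps out $Ae_1 A = A$ (again by simplicity), so the second summand exhausts $\{e_i d e_i - e_i d^* e_i : d\in A\}$, which coincides with $e_i K e_i$ because $\mathrm{char}\,F\neq 2$. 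Thus every element of $e_i K e_i$ differs from an element of $[K,K]$ by an element of $e_1 K e_1$, proving $e_i K e_i\subseteq e_1 K e_1 + [K,K]$.

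Combining, any $a\in K$ is congruent modulo $[K,K]$ to $\sum_i a_{ii}$, and then applying the second step to each $a_{ii}$ with $i\neq 1$ further places $a$ in $e_1 K e_1 + [K,K]$, which is the lemma. The main obstacle is the engineering of the two bracket identities: the elements $X,Y\in K$ have to be arranged so that only one of the four possible cross products survives in each of $XY$ and $YX$, the rest being annihilated by the orthogonality of the $e_m$. Once those identities are established, simplicity of $A$ (in the form $Ae_j A = A$ for any nonzero symmetric idempotent $e_j$) together with the routine Peirce bookkeeping deliver the conclusion.
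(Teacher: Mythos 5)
Your proof is correct, and at its core it runs on the same engine as the paper's: commutators of skew elements of the form $e_i\alpha e_l-e_l\alpha^*e_i$ in which orthogonality kills all but one cross product, together with simplicity in the form $Ae_lA=A$ (your first bracket identity is exactly the paper's identity $\{e_iae_jbe_k\}=[e_iae_j-e_ja^*e_i,\,e_jbe_k-e_kb^*e_j]$). Where you diverge is in the bookkeeping: you unitalize, introduce $e_0=1-e_1-e_2-e_3$, and run a full Peirce decomposition, disposing of off-diagonal blocks directly and of the diagonal blocks $e_iKe_i$ ($i\neq1$) via a second two-sided bracket identity whose uncontrolled term is absorbed into $e_1Ke_1$; the paper instead avoids unitalization by working with the one-sided elements $ae_1-e_1a^*$ and $e_1b-b^*e_1$, first proving $K=\{e_1A\}+[K,K]$ and then pushing $\{e_1A\}$ into $e_2Ke_2+[K,K]$. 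Your version is arguably more symmetric and transparent, and it is sound in the non-unital case because simplicity is only ever invoked for the genuine idempotents $e_1,e_2,e_3\in A$ and every Peirce component and every auxiliary skew element you form (even those involving $e_0$) visibly lies in $A$; it would be worth saying that last point explicitly. Note also that you use $\mathrm{char}\,F\neq2$ to identify $\{e_ide_i-e_id^*e_i:d\in A\}$ with $e_iKe_i$; this is harmless since it is a standing hypothesis of the paper, and the paper's own proof implicitly uses $\tfrac12$ as well when it passes from $\{ae_1b\}\in\{e_1A\}+[K,K]$ to $K=\{e_1A\}+[K,K]$.
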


\begin{proof}
Denote $K=K(A,*).$ For an arbitrary element $a\in A$ denote $\{a\}=a-a^*.$ For distinct $i,j,k\in\{1,2,3\}$ and for arbitrary elements $a,b\in A$ we have $\{e_iae_jbe_k\}=[e_iae_j-e_ja^*e_i, e_jbe_k-e_kb^*e_j]\in [K,K].$ Since $A=Ae_jA$ it follows that $\{e_iAe_k\}=\{e_iAe_jAe_k\}\subseteq [K,K].$
For arbitrary elements $a,b\in A$ we have $[ae_1-e_1a^*, e_1b-b^*e_1]=\{ae_1b\}-\{ae_1b^*e_1\}-\{e_1a^*e_1b\}+\{e_1a^*b^*e_1\}\in [K,K].$
Hence $\{ae_1b\}\in \{e_1A\}+[K,K].$ Since $A=Ae_1A$ it implies that $K=\{e_1A\}+[K,K].$ Again, for arbitrary elements $a,b\in A$ we have
$[e_1ae_2-e_2a^*e_1,e_2b-b^*e_2]=\{e_1ae_2b\}-\{e_1ae_2b^*e_2\}+\{e_2a^*e_1b^*e_2\}\in[K,K].$ Hence $\{e_1ae_2b\}\in \{e_1Ae_2\}+e_2Ke_2+[K,K]=e_2Ke_2+[K,K].$
Since $A=Ae_2A$ we get $\{e_1A\}\subseteq e_2Ke_2+[K,K]$ and in view of the earlier inclusions, $K=e_2Ke_2+[K,K].$ Similarly we can show that $K=e_iKe_i+[K,K]$ for $i=1,2,3.$
\end{proof}

\begin{lemma}\label{le12}
$[K(L(\Gamma),*),K(L(\Gamma),*)]=[K(I,*),K(I,*)]$. In particular, this Lie algebra is simple.
\end{lemma}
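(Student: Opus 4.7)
The equality $[K,K] = [K_I, K_I]$ has one easy and one hard direction. The inclusion $[K_I, K_I] \subseteq [K, K]$ is immediate from $K_I \subseteq K$. For the reverse, I first observe that the quotient $L(\Gamma)/I$ is a direct sum of copies of $F[t, t^{-1}]$, hence is commutative. Consequently $[L(\Gamma), L(\Gamma)] \subseteq I$, and so $[K, K] \subseteq K \cap I = K_I$. To sharpen this to $[K, K] \subseteq [K_I, K_I]$ I would use the vector-space decomposition $K = K_I \oplus K_0$, where $K_0 = \mathrm{span}\{c_i^k - (c_i^*)^k : i,\, k \geq 1\}$ is the skew part of the natural $*$-stable complement $B_0 = \mathrm{span}\{c_i^k (c_i^*)^l\}$ to $I$ in $L(\Gamma)$. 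Decomposing arbitrary $a, b \in K$ as $a = a_I + a_0$, $b = b_I + b_0$, the commutator $[a, b]$ breaks into four pieces, each of which must be placed in $[K_I, K_I]$.

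The piece $[a_I, b_I]$ is immediate. For $[a_0, b_0]$, expansion via the saturation relation $v_i = c_i c_i^* + \sum_{e \in E_i} ee^*$ at $v_i$ rewrites $[c_i^k - (c_i^*)^k, c_i^m - (c_i^*)^m]$ as a sum of terms $\{c_i^a ee^*(c_i^*)^b\}$ (cross-$i$ commutators vanish because different balloons do not compose); here I write $\{x\} = x - x^*$ as in Lemma~\ref{le11}. Using $ec_i = 0$, $c_i^* e = 0$, $c_i^* e^* = 0$ to kill the spurious cross products, a short direct calculation shows
\[
\{c_i^a ee^*(c_i^*)^b\} = -[c_i^a e - e^*(c_i^*)^a,\ c_i^b e - e^*(c_i^*)^b],
\]
and both arguments lie in $K_I$ since $c_i^a e$ ends at $r(e) \in W$ and so lies in $I$. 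Hence $[K_0, K_0] \subseteq [K_I, K_I]$.

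The main obstacle is the mixed piece $[K_I, K_0]$, which I handle by invoking Lemma~\ref{le11} applied to the simple algebra $I$ (Lemma~\ref{le10}). For the required three pairwise orthogonal symmetric idempotents in $I$, I would take $e_1 = w$ for a chosen vertex $w \in W$ together with $e_2 = c_i ee^* c_i^*$ and $e_3 = c_i^2 ee^*(c_i^*)^2$ for some balloon $v_i$ and $e \in E_i$; pairwise orthogonality follows from $c_i^* e = 0$ and $w v_i = 0$. Lemma~\ref{le11} then gives $a_I = wyw + z$ with $y \in K_I$ and $z \in [K_I, K_I]$. Since each $v_i \notin W$ is distinct from $w$, we have $w c_i = c_i w = w c_i^* = c_i^* w = 0$, so $wyw$ annihilates every element of $K_0$ on both sides, forcing $[wyw, b_0] = 0$. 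For the residual $[z, b_0]$, write $z = \sum_l [p_l, q_l]$ and apply the Jacobi identity: each term $[[p_l, q_l], b_0]$ unpacks into commutators involving $[p_l, b_0]$ and $[q_l, b_0]$, which lie in $K_I$ because $K_I$ is a Lie ideal of $K$. Hence $[z, b_0] \in [K_I, K_I]$ and so $[K_I, K_0] \subseteq [K_I, K_I]$. Combining the four pieces yields $[K, K] = [K_I, K_I]$.

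For the ``in particular'' simplicity assertion, I would invoke Herstein's theorem: $I$ is a simple associative algebra with involution over a field of characteristic $\neq 2$, and is infinite-dimensional over its center (as witnessed by the $c_i$-powers together with the simple subalgebra generated by $W$). Herstein's classical result then yields that $[K(I, *), K(I, *)]$ is a simple Lie algebra, and in view of the equality just established so is $[K, K]$.
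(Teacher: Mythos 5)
Your proposal is correct, and its overall skeleton coincides with the paper's: you split $K=K_I\oplus K_0$ with $K_0=\mathrm{span}\{c_i^k-(c_i^*)^k\}$ (the paper's spanning claim), you handle the mixed piece $[K_I,K_0]$ exactly as the paper does --- Lemma~\ref{le11} applied to the simple algebra $I$ gives $K_I=wK_Iw+[K_I,K_I]$, the $wK_Iw$ part dies against $K_0$ since $wc_i=c_iw=0$, and the Jacobi identity together with the fact that $K_I$ is a Lie ideal of $K$ absorbs $[\,[K_I,K_I],K_0]$ into $[K_I,K_I]$ --- and you conclude simplicity from Herstein, as the paper does. Where you genuinely diverge is the piece $[K_0,K_0]$: the paper rewrites $c_i^p+(c_i^*)^p$ as $(c_i+c_i^*)^p$ plus lower terms plus a symmetric element of $I$, lands in $[H(L(\Gamma),*),H(I,*)]$, and then needs Herstein's theorem $H(I,*)=\mathrm{span}\{k^2\}$ plus the identity $[h,k^2]=[hk+kh,k]$; you instead expand $[c_i^k-(c_i^*)^k,\,c_i^m-(c_i^*)^m]$ directly via the Cuntz--Krieger relation $v_i=c_ic_i^*+\sum_{e\in E_i}ee^*$ into terms $\{c_i^aee^*(c_i^*)^b\}$ and exhibit each as $-[c_i^ae-e^*(c_i^*)^a,\,c_i^be-e^*(c_i^*)^b]$ with both entries in $K_I$ (the identity checks out: the spurious $\delta_{a,b}r(e)$ terms cancel, using $ec_i=c_i^*e=e^*c_i=0$). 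This is more elementary and self-contained for that step, avoiding one of the two invocations of Herstein, though Herstein is still needed for the final simplicity of $[K_I,K_I]$; your choice of the three orthogonal symmetric idempotents $w,\;c_iee^*c_i^*,\;c_i^2ee^*(c_i^*)^2$ is a valid concrete instantiation of what the paper leaves implicit. Two small imprecisions, neither fatal: $B_0=\mathrm{span}\{c_i^k(c_i^*)^l\}$ is not actually a vector-space complement to $I$ (e.g.\ $c_ic_i^*-v_i\in B_0\cap I$), although the decomposition $K=K_I\oplus K_0$ you actually use is correct and easily checked modulo $I$; and the $c_i$-powers do not lie in $I$, so infinite-dimensionality of $I$ should instead be witnessed by the orthogonal idempotents $c_i^kee^*(c_i^*)^k\in I$, as in the paper.
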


\begin{proof}
As above Let $V\setminus W=\{v_i\mid\, i\in\Omega\},\, E(v_i,v_i)=\{c_i\}.$
Then $K(L(\Gamma),*)=span\{c_i^p-(c_i^*)^p\mid\, i\in\Omega, p\geq 0\}+K(I,*).$ The algebra $I$ has an infinite family of pairwise orthogonal idempotents, which includes vertices from $W.$ Choose $w\in W.$ Then $K(I,*)=wK(I,*)w+[K(I,*),K(I,*)].$ \newline Now, $[c_i^p-(c_i^*)^p,K(I,*)]=[c_i^p-(c_i^*)^p,wK(I,*)w+[K(I,*),K(I,*)]]=[c_i^p-(c_i^*)^p,[K(I,*),K(I,*)]]$ which lies in $ [K(I,*),K(I,*)].$
Hence $[K(L(\Gamma),*),K(I,*)]\subseteq [K(I,*),K(I,*)].$ Let us analyze  an element $[c_i^p-(c_i^*)^p,c_i^q-(c_i^*)^q]$ which is equal to $-[c_i^p+(c_i^*)^p,c_i^q+(c_i^*)^q].$
Furthermore, $c_i^p+(c_i^*)^p=(c_i+c_i^*)^p+\sum_{k<p}\alpha_k(c_i^k+(c_i^*)^k)+h,\quad h\in H(I,*)=\{a\in I \mid\, a^*=a\}.$ This implies that $[c_i^p-(c_i^*)^p,c_i^q-(c_i^*)^q]\in [H(L(\Gamma),*),H(I,*)].$
I. Herstein proved that for a simple algebra $A$ of characteristic not equal to $2$ and of dimension not equal to $1$ or $4$ we have  $H(A,*)=span\{k^2\mid\, k\in K(A,*)\}.$ The algebra $I$ has an infinite family of pairwise orthogonal idempotents and therefore is infinite dimensional. If $h\in H(L(\Gamma),*), k\in K(I,*)$ then $[h,k^2]=[hk+kh,k]\in[K(L(\Gamma),*),K(I,*)]=[K(I,*),K(I,*)].$
Thus $[K(L(\Gamma),*),K(L(\Gamma),*)]=[K(I,*),K(I,*)].$ The latter algebra is simple, again by the theorem of I. Herstein.
\end{proof}

\section*{Acknowledgement}

The authors would like to thank professor Efim Zelmanov for his constant advise and valuable help during the preparation of this work.
The authors would also like to express their appreciation to professor  S. K. Jain for carefully reading the manuscript and for offering his comments.
\newline The authors sincerely thank the referee for numerous helpful remarks. This paper was funded by King Abdulaziz University, under grant No. (7-130/1433 HiCi).
The authors, therefore, acknowledge technical and financial support of KAU.

\bibliographystyle{amsplain}

\begin{thebibliography}{10}


\bibitem {AA} G. Abrams, G. Aranda Pino, \textit{The Leavitt path algebra of a graph,} J. Algebra 293 (2005), 319-334.

\bibitem {AM} G. Abrams, Z. Mesyan, \textit{Simple Lie algebras arising from Leavitt path algebras,} Journal of pure and applied algebra 216, (2012), 2303-2313.

\bibitem {AA2} G. Abrams, G. Aranda Pino, F. Perera, M. Siles Molina. \textit{Chain conditions for Leavitt path algebras.} Forum Math., 22 (2010), 95 - 114.

\bibitem {AAJZ} A. Alahmadi, H. Alsulami, S.K. Jain, E. Zelmanov, \textit{Leavitt path algebras of finite Gelfand Kirillov dimension,} Journal of Algebra and Its Applications, 171 (2012), 1250225, 1-6.

\bibitem {PP}  P. Ara, E. Pardo, \textit{Stable rank of Leavitt path algebras,} Proc. Amer. Math. Soc. 136 (7) (2008), 2375 - 2386.

\bibitem {APM} G. Aranda Pino, E. Pardo, M. Siles Molina, \textit{Exchange Leavitt path algebras and stable rank,} J. Algebra 305 (2) (2006), 912 - 936.

\bibitem{H} I.N.Herstein, \textit{Topics in Ring Theory,} Mathematics Lecture Notes, University of Chicago,1965.

\bibitem{H2} I.N.Herstein, \textit{Rings with Involution,} Mathematics Lecture Notes, University of Chicago,1976.

\bibitem{SM}  M. Siles Molina, \textit{Algebras of quotients of Leavitt path algebra,} J. Algebra 319 (12) (2008), 329 - 348.

\end{thebibliography}

\end{document}